\documentclass[12pt]{amsart}
\usepackage{amsmath,amssymb,mathrsfs,epsfig,mathtools,rotating,hyperref}

\theoremstyle{theorem}
\newtheorem{theo}{Theorem}[section]
\newtheorem{lemm}[theo]{Lemma}
\newtheorem{conj}[theo]{Conjecture}

\theoremstyle{remark}
\newtheorem{rema}[theo]{Remark}

\theoremstyle{definition}
\newtheorem{defi}[theo]{Definition}

\numberwithin{equation}{section}
\numberwithin{figure}{section}
\def\slbf#1{\text{\boldmath$#1$}}

\graphicspath{{./pictures/}}	

\author{Ivan Dynnikov}
\title{On Zeeman's collapsibility conjecture for non-standard polyhedra}
\address{\noindent Steklov Mathematical Institute of Russian Academy of Sciences, 8 Gubkina Str., Moscow 119991, Russia}
\email{dynnikov@mi-ras.ru}

\begin{document}

\maketitle

\begin{abstract}
We prove that if Zeeman's collapsibility conjecture holds for standard polyhedra,
then it holds in general.
\end{abstract}



\section{Introduction}

In 1964, E.\,C.\,Zeeman proposed the following conjecture.
\begin{conj}[\cite{{z64}}]\label{zcc-conj}
Let~$K$ be a contractible compact two-dimensional polyhedron. Then~$K\times\left[0;1\right]$ is collapsible.
\end{conj}

It is well known that this conjecture (which we abbreviate as ZCC)
implies
the Andrews--Curtis conjecture with stabilizations (ACCS), which states the following.

\begin{conj}\label{zccs-conj}
Let~$\langle a_1,\ldots,a_n\,|\,r_1,\ldots,r_n\rangle$ be a balanced presentation
of the trivial group. Then it can be transformed into a trivial balanced presentation
by a sequence of operations of the following kinds\emph:
\begin{enumerate}
\item
the relators~$r_i$ are permuted\emph;
\item
$r_1$ is replaced by~$r_1^{-1}$\emph;
\item
$r_1$ is replaced by~$r_1r_2$\emph;
\item
$r_1$ is replaced by~$wr_1w^{-1}$, where~$w$ is a word in the generators~$a_1^{\pm1},\ldots,a_n^{\pm1}$\emph;
\item
\emph(stabilization\emph)
a new generator~$a_{n+1}$ and a new relator~$r_{n+1}=a_{n+1}$ are added, and $n$ is incremented by~$1$.
\end{enumerate}
\end{conj}

By a \emph{trivial balanced presentation} we mean a group presentation of the form
$$\langle a_1,\ldots,a_m\,|\,a_1,\ldots,a_m\rangle.$$

ACCS is sometimes confused with the original Andrews--Curtis conjecture~\cite{AC65},
which differs by not allowing stabilizations. (However, the version with stabilizations is mentioned in a referee's remark
quoted in~\cite{AC65}.) This confusion is likely due to the fact that ACCS admits
a very short reformulation in topological terms, which reads as follows.

\begin{conj}[Reformulation of ACCS]
Any contractible compact two-polyhedron is three-deformable to a point.
\end{conj}

The Poincar\'e conjecture (PC) is also an quick corollary of ZCC, as was already
noted by Zeeman. In the 1980s, it was discovered that the inverse implications hold
in the case of standard polyhedra: PC and ACCS together imply ZCC restricted to
the class of standard polyhedra.

More precisely, PC implies ZCC restricted to standard spines of compact three-manifolds, as shown
by~D.\,Gillman and D.\,Rolfsen in~\cite{GR83}, whereas ACCS implies ZCC for standard polyhedra
that are not spines of three-manifolds, which is  a result due to S.\,Matveev~\cite{mat87}.

The aim of the present paper is to show that `standard' can be omitted from the aforementioned results
by Gillman--Rolfsen and Matveev. Thus, since the Poincar\'e conjecture has been proven by G.\,Perelman~\cite{p1,p2,p3,mt},
we establish the equivalence of ACCS and ZCC.

While ZCC remains unsettled, one can ask a more general question about the smallest~$r\in\mathbb N$
such that any contractible compact two-polyhedron~$K$ is~$r$-collapsible (meaning that~$K\times\left[0;1\right]^r$
is collapsible). As shown by M.\,Cohen~\cite{cohen} the latter holds for~$r=6$.
We show that the condition of being standard is not essential in this more general context, either.
Namely, we prove the following result.

\begin{theo}\label{main-th}
Suppose that there exists a compact contractible two-polyhedron~$K$ that is not $r$-collapsible
for some~$r\in\mathbb N$. Then there exists a standard contractible two-polyhedron~$K'$
which is not $r$-collapsible.
\end{theo}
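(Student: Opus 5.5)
Given a compact contractible two-polyhedron $K$ that is not $r$-collapsible, I will build a standard contractible polyhedron $K'$ with the property that $r$-collapsibility of $K'$ implies $r$-collapsibility of $K$. The most convenient form of this is to find $K'$ such that $K'\times[0;1]^r$ collapses onto a copy of $K\times[0;1]^r$, or more robustly such that $K\times[0;1]^r$ embeds in $K'\times[0;1]^r$ and the complement collapses away. If in addition $K'\times[0;1]^r$ collapses to a point, then $K\times[0;1]^r$ is collapsible. This last step I would justify by the standard fact that collapsibility is inherited under such "collapse onto" relations, via the Whitehead-type argument that a collapsible polyhedron collapsing onto $L$ gives $L$ collapsible when the ambient dimension permits. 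If that fact is not available in general, I would instead arrange directly that $K\times[0;1]^r$ is obtained from $K'\times[0;1]^r$ by elementary expansions and collapses that preserve collapsibility.

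The construction proceeds in steps. First, I reduce $K$ to a polyhedron without free edges and with a generic local structure, allowing only the point links of a simple polyhedron plus isolated ``bad'' points. Collapsing $K$ does not change $r$-collapsibility in the relevant direction, because $K\searrow L$ implies $K\times I^r\searrow L\times I^r$. So I may take $K$ to have no collapsible free faces, and I may also discard one-dimensional parts, which can be handled by thickening arcs into discs. Second, I thicken $K$ into a simple polyhedron. Near each non-simple point or edge, meaning an edge of valence $\ge 4$ or a vertex with a complicated link, I perform a local modification: a small regular-neighbourhood-type replacement $N$ of the singular set by a simple polyhedron $N'$ with $N'\searrow N$ relative to the boundary. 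Gluing then gives a simple $K_1$ with $K_1\searrow K$, and in particular $K_1$ is contractible. Because a collapse $K_1\searrow K$ gives $K_1\times I^r\searrow K\times I^r$, collapsibility of $K_1\times I^r$ would give collapsibility of $K\times I^r$.

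Third, I pass from simple to standard, which requires that all 2-components be open discs and all edge components be open arcs. I achieve this by adding extra walls (membranes): attaching discs along arcs that cut non-disc 2-strata into discs, together with triangles filling in, in a way that is an elementary expansion. The point is to use ``bubble'' or ``wall'' moves, each of which is a 3-deformation realised as an expansion of $K_1$ followed by a collapse. I have to verify that each move can be realised inside $K_1\times I$ and is therefore an $r$-expansion, i.e.\ $K_2\times I^r$ collapses onto the image of $K_1\times I^r$ for $r\ge1$.

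I expect the main obstacle to be this last step. Naive wall insertion produces a polyhedron $K_2$ that $3$-deforms to $K_1$ but does not collapse onto it, while the direction needed is that $K_2$ (standard) collapses to, or has product collapsing to, $K_1$. To handle it, I would first try adding membranes that are themselves expansions: for a non-disc region, attach a 3-ball's boundary-minus-disc along suitable arcs, so that the result collapses back onto $K_1$ after crossing with $I$. I would then check that the transversality conditions defining standardness survive, and that each new 2-cell of $K_2$ is a disc.
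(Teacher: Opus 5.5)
There is a genuine gap: every implication in your plan points the wrong way. The only usable fact is that if $X\searrow Y$ and $Y$ is collapsible, then $X$ is collapsible. The converse you invoke in your first paragraph fails in general once $\dim X\geqslant3$, which is exactly the range of $K\times I^r$. By Zeeman's theorem, $D\times I$ is collapsible for the dunce hat $D$, yet $D\times I\searrow D\times 0$ and $D$ is not collapsible; similarly, the $3$-ball collapses onto Bing's house. So from $K_1\searrow K$, hence $K_1\times I^r\searrow K\times I^r$, you only learn that $r$-collapsibility of $K$ implies that of $K_1$. That is the opposite of what the theorem needs. The same objection applies to your wall step: you say the required direction is that the standard $K_2$ collapses onto $K_1$, but one actually needs $K_1\times I^r$ to collapse onto a copy of $K_2\times I^r$.

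Moreover, your second step cannot be carried out at all. If $K_1\searrow K$, then $K\subset K_1$ and $\mathrm{lk}(p,K)\subset\mathrm{lk}(p,K_1)$ for every $p\in K$. Neither $\Theta_d$ with $d\geqslant4$ nor $\mathscr K_{3,3}$ embeds in $\Theta_3$ or $\mathscr K_4$. Hence no simple polyhedron collapses onto a $K$ that has a true edge of degree at least four or a vertex with link $\mathscr K_{3,3}$: two-dimensional expansions never remove singularities.

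The missing idea is a supply of moves $K\mapsto K'$ that make $K$ more standard while going downhill after crossing with $I$, so that $r$-collapsibility of $K'$ implies that of $K$. The paper uses three such moves:
\begin{itemize}
\item elementary collapses of $K$ itself;
\item contractions of collapsible subpolyhedra, where Lemma~\ref{I-contract-collapse-lem} lifts a collapse of the contracted polyhedron to a collapse of the original;
\item special extractions at a vertex, whose key property (Lemma~\ref{resol-collaps-lem}) is $K\times I\searrow\mathfrak s(K,T_1,T_2)\times I$.
\end{itemize}
The last is a genuinely three-dimensional collapse with no analogue among collapses of $K$ itself, since $\mathfrak s(K,T_1,T_2)$ is not obtained from $K$ by removing anything.

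With these moves the paper inducts on the non-standardness $\nu(K)$: high-degree true edges, vertex links other than $\mathscr K_4$, and non-disc $2$-strata, the last handled via Matveev moves. The moves naturally create separating vertices, so the procedure ends at a semi-standard $K_1\#\cdots\#K_m$ rather than a single standard polyhedron. The theorem then follows because a wedge of $r$-collapsible polyhedra is $r$-collapsible, so some contractible standard summand $K_i$ fails to be $r$-collapsible. Your proposal would have to be rebuilt around downhill moves of this kind.
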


This theorem follows from Theorem~\ref{K->semi-standard-th}, which is given in the next section.
The Poincar\'e--Perelman theorem, together with the Gillman--Rolfsen theorem and Theorem~\ref{K->semi-standard-th} below,
also implies the following.

\begin{theo}\label{spines-th}
ZCC holds for polyhedra embeddable into three-manifolds.
\end{theo}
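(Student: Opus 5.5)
Theorem~\ref{spines-th} should follow from three inputs: the Poincar\'e--Perelman theorem, the Gillman--Rolfsen result that PC implies ZCC for standard spines of compact three-manifolds, and the forthcoming Theorem~\ref{K->semi-standard-th}. I am assuming that Theorem~\ref{K->semi-standard-th} does two things. First, it replaces an arbitrary compact contractible two-polyhedron~$K$ by a (semi-)standard contractible polyhedron~$K'$ such that $K\times[0;1]$ is collapsible whenever $K'\times[0;1]$ is. Second, it does this by local moves in a regular neighbourhood of~$K$, so that if $K$ embeds in a three-manifold~$M$, then $K'$ also embeds in~$M$ (indeed in a regular neighbourhood of~$K$).

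The plan has four steps.

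\emph{Step 1.} Start with a contractible compact two-polyhedron $K\subset M$, where $M$ is a three-manifold. Replace~$M$ by a regular neighbourhood $N$ of~$K$. Then $N$ is a compact contractible three-manifold that collapses onto~$K$. Its boundary is a homology sphere, and since $\pi_1(N)=1$ it is a homotopy $2$-sphere, hence $S^2$. Capping off gives a homotopy $3$-sphere, which by Perelman is $S^3$. Therefore $N$ is a $3$-ball.

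\emph{Step 2.} Apply Theorem~\ref{K->semi-standard-th} to obtain a standard contractible polyhedron $K'$ lying in $N$. The construction thickens and perturbs $K$ inside~$N$, for example by resolving non-generic singular points and arcs into generic triple lines and vertices by small local moves. I expect $N$ to collapse onto~$K'$, so that $K'$ is a standard spine of the ball~$N$.

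\emph{Step 3.} Gillman--Rolfsen, combined with PC, then gives that $K'\times[0;1]$ is collapsible.

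\emph{Step 4.} Transfer back to $K$ using the implication in Theorem~\ref{K->semi-standard-th}.

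The main obstacle is in Step~2. One must check that the standard polyhedron $K'$ produced by Theorem~\ref{K->semi-standard-th} really is a spine of a three-manifold. Generic perturbation in dimension three yields simple polyhedra embedded in~$N$, but non-generic local structure of~$K$ must be resolved in a way compatible with the embedding. I would first try to realize every move of that construction inside a regular neighbourhood. If some move instead produces a non-embeddable link, such as one containing a wedge of circles not embeddable in $S^2$, the fallback is to argue directly that $K$ is a spine of the ball~$N$. One would then re-prove the Gillman--Rolfsen argument for arbitrary (non-standard) spines of the $3$-ball via a special spine of $N$ that is $3$-deformation-equivalent to $K$ through collapses inside~$N$.
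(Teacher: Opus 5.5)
Your overall route is the paper's own: Theorem~\ref{spines-th} is read off there from Theorem~\ref{K->semi-standard-th} together with the Poincar\'e--Perelman and Gillman--Rolfsen theorems. As written, however, Step~3 does not go through. Theorem~\ref{K->semi-standard-th} only gives a \emph{semi-standard} polyhedron $K'=K_1\#\ldots\#K_m$. For $m>1$ this is not standard, because the link of a wedge point is disconnected. Gillman--Rolfsen concerns standard spines, so it cannot be applied to $K'$. Your move from ``(semi-)standard'' in your assumptions to ``standard'' in Step~2 hides exactly this.

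The repair is to argue summand by summand, which is why clause~(3) of Theorem~\ref{K->semi-standard-th} is phrased in terms of the $K_i$. Each $K_i$ is standard. It is contractible, being a retract of $K'$, which is $3$-deformable, hence homotopy equivalent, to $K$. It embeds in a three-manifold because $K'$ does, by clause~(2). Its regular neighbourhood there is a compact contractible three-manifold collapsing onto $K_i$ (a ball, by your Step~1 argument), so $K_i$ is a standard contractible spine. Gillman--Rolfsen plus Perelman then make each $K_i\times I$ collapsible, and clause~(3) with $r=1$ gives collapsibility of $K\times I$.

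Second, the ``main obstacle'' you identify in Step~2 is not an obstacle, and the fallback of re-proving Gillman--Rolfsen for non-standard spines is unnecessary. You do not need $K'$ to lie inside $N$, nor $N\searrow K'$. Any compact polyhedron embedded in a three-manifold is a spine of its regular neighbourhood, so clause~(2) already supplies everything required. Accordingly, your Step~1 should be applied to the regular neighbourhoods of the summands $K_i$, not to that of $K$.
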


\section{Preliminaries. Standard two-polyhedra}\label{gen-set-st-poly-subsec}

We work in the PL category, but often prefer to use the language of cell complexes. All cell complexes involved are assumed 
to be compact and to carry
a PL structure compatible with their cell structure. All maps involved are assumed to be continuous and piecewise linear
unless otherwise specified.

By `a complex' we mean `a polyhedral cell complex with a fixed cell structure,' whereas `polyhedra' are viewed up
to subdivision. Thus, `a subcomplex' refers to a complex consisting
of whole cells of another complex, whereas `a subpolyhedron' may be an arbitrary subspace that becomes
a simplicial subspace after a suitable subdivision. By `a cell' we mean `an open cell'. Cell complexes are not necessarily
regular; that is, we do not require the attaching
map of each cell to be injective. In particular, the closure of a cell need not be a disc.

When~$C$ is a complex, we denote its~$i$-skeleton by~$C^{(i)}$, where~$i\in\mathbb N\cup\{0\}$.
The unit interval~$\left[0;1\right]$ is denoted by~$I$. It is regarded as a cell complex consisting of three cells:
$\{0\}$, $\{1\}$, and~$(0;1)$.

We denote by~$\Theta_n$, $n\in\mathbb N$ a graph consisting of two vertices and~$n$ edges
connecting them.
The symbol~$\mathscr K_n$, $n\in\mathbb N$,
denotes a complete graph with~$n$ vertices.
By~$\mathscr S_n$, $n\in\mathbb N$, we denote a star graph with~$n$ leaves,
that is, the cone over an $n$-point discrete space.

By~$\mathscr K_{3,3}$ we denote a complete bipartite graph with~$3+3$ vertices. By a \emph{special tree}
we mean a tree in which every vertex has degree either one or three. A disjoint union of special trees is called a \emph{special forest}.
The degree-one vertices of a graph~$G$ are called the \emph{leaves} of~$G$.

\begin{defi}
A finite two-dimensional cell complex~$K$ is called \emph{standard} if the link of
any point~$p\in K^{(1)}\setminus K^{(0)}$ is homeomorphic to~$\Theta_3$,
and the link of any point~$p\in K^{(0)}$ is homeomorphic to~$\mathscr K_4$.
A \emph{standard polyhedron} is the underlying polyhedron of a standard complex.
\end{defi}

For any polyhedron~$K$, we denote by~$K^{(i)\mathrm t}$, $i\in\mathbb N\cup\{0\}$,
the set of points~$p\in K$ whose regular neighborhood does not have the form~$B\times I^{i+1}$,
where~$B$ is a polyhedron. Equivalently, $K^{(i)\mathrm t}$ is the set of points~$p\in K$
that belong to the $i$-skeleton of any cell decomposition of~$K$.
This subset is called the \emph{true $i$-skeleton} of~$K$.
The points in~$K^{(0)\mathrm t}$ are called \emph{true vertices}, and the
connected components of~$K^{(1)\mathrm t}\setminus K^{(0)\mathrm t}$
that are homeomorphic to an open interval are called \emph{true edges} of~$K$.
By the \emph{degree}
of a true edge~$e$ of~$K$ we mean the number~$k$ such that~$\mathrm{lk}(p,K)=\Theta_k$
for~$p\in e$.

For a polyhedron~$K$, we define the \emph{boundary of~$K$}, denoted by~$\partial K$, as
the set of points~$p$ having a regular neighborhood~$A$
such that~$(A,p)$ is homeomorphic to~$(B\times I,q\times1)$, where~$B$ is a polyhedron and~$q\in B$.
(Note that the boundary of a polyhedron is not necessarily a closed subset.)
The boundary (frontier) of a subset~$X$ of a topological space will be denoted by~$\dot X$ in order to distinguish
it from~$\partial X$ when~$X$ is also a subpolyhedron.

\begin{defi}
We say that a complex~$Y$ is obtained from a complex~$X$ by an \emph{elementary collapse}
if~$Y=X\setminus(\sigma\cup\eta)$, where~$\sigma$ and~$\eta$ are two cells of~$X$ such that~$\dim\sigma=\dim\eta+1$
and~$\eta\subset\partial X\cap\dot\sigma$. The cell~$\eta$ is then said to be a \emph{free face} of~$\sigma$
(with respect to~$X$).
We also say that~$Y$ is obtained from~$X$ by \emph{collapsing~$\eta$ across~$\sigma$}
and write~$X\stackrel{\sigma,\eta}\searrow Y$ to indicate this.
(Our definition is slightly more general than that of an elementary simplicial collapse,
but it is well known that the collapsibility relation generated by elementary
collapses is insensitive to this
difference. There are two other commonly used definitions that work equally well.)

A complex is called \emph{collapsible} if it collapses to a single-point space. A polyhedron without
a fixed cell structure is called collapsible if it admits a collapsible cell structure.
\end{defi}

\begin{defi}
We say that two polyhedra~$K$ and~$K'$ are $r$-\emph{deformable} to each other
if there exists a finite sequence of polyhedra~$K_0=K,K_1,K_2,\ldots,K_m=K'$
such that~$\dim K_i\leqslant r$, and either~$K_{i-1}\searrow K_i$
or~$K_i\searrow K_{i-1}$ for all~$i=1,\ldots,m$.
\end{defi}

\begin{defi}
We say that a polyhedron~$K$ is a \emph{connected sum} of polyhedra~$K_1$ and~$K_2$,
and write~$K=K_1\#K_2$,
if there exist points~$p_1\in K_1$ and~$p_2\in K_2$ such that~$K\cong(K_1\sqcup K_2)/(p_1\sim p_2)$.
A connected sum of~$m$ polyhedra with~$m>2$ is defined inductively.

A connected sum~$K_1\#K_2\#\ldots\#K_m$ of several standard polyhedra is called \emph{semi-standard}.
\end{defi}

Theorems~\ref{main-th} and~\ref{spines-th} are corollaries of the following more general statement.

\begin{theo}\label{K->semi-standard-th}
Let~$K$ be a compact contractible two-polyhedron. Then there exists a semi-standard two-polyhedron~$K'=K_1\#K_2\#\ldots\#K_m$ such that
\begin{enumerate}
\item
$K$ is three-deformable to~$K'$\emph;
\item
if~$K$ embeds into a three-manifold, then so does~$K'$\emph;
\item
if~$r\in\mathbb N$ and each~$K_i$ is $r$-collapsible, then~$K$ is also $r$-collapsible.
\end{enumerate}
\end{theo}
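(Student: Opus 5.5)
We will transform $K$ step by step into a semi-standard polyhedron. Each step is local and controlled, so that the three properties in Theorem~\ref{K->semi-standard-th} are checked move by move. The basic device is a ``thickening in codimension zero'': a piece of $K$ is replaced by a two-dimensional polyhedron that collapses onto it. Such a replacement is a three-deformation. It also preserves embeddability into three-manifolds whenever the new piece can be realized inside a regular neighbourhood of the old one. Finally, it preserves $r$-collapsibility in the right direction, because if $K''\searrow K$ then $K\times I^r$ is obtained from $K''\times I^r$ by collapses. So if $K''\times I^r$ is collapsible, so is $K\times I^r$: we collapse $K''\times I^r$ to $K\times I^r$ first, and then use the fact that collapsibility is preserved in this order.

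\textbf{Step 1: make $K$ a two-polyhedron without free one-dimensional parts and with a reasonable singular graph.} Since $K$ is contractible, we may first collapse $K$ as far as possible. This does not affect (1) or (2), and it helps (3), since $K\searrow L$ and $L$ $r$-collapsible imply $K$ $r$-collapsible. Any remaining one-dimensional parts (trees hanging off) collapse away. We may then assume $K$ is purely two-dimensional away from finitely many points, with no free edges.

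\textbf{Step 2: remove wedge points.} We split $K$ at cut points whose link is disconnected. Where the link is disconnected, $K$ is locally a wedge, and this is exactly where connected sums arise. Each such point separates $K$, by contractibility, into pieces $K_i$ that are each contractible. The pieces are treated separately, giving the $\#$-decomposition in $K'$. For (3) one needs that if each $K_i\times I^r$ collapses, then $(K_1\#K_2)\times I^r$ collapses: we collapse $K_2\times I^r$ onto $p_2\times I^r=p_1\times I^r$, which is allowed because collapses of $K_2\times I^r$ happening away from the wedge fiber extend, and the fiber $p\times I^r$ is collapsible.

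\textbf{Step 3: standardize each piece locally.} Now every point has connected link. We thicken each true edge of degree $k\ge 4$ into a chain of degree-three edges by inserting small discs, as in the usual resolution of a $\Theta_k$ link into trivalent pieces. We handle degree-two and boundary edges by collapsing or by attaching a small disc, in the spirit of the standard trick of turning a free edge into a triple line via a ``bubble''. At each vertex, the link is a connected graph, which we first replace by a trivalent graph via blowing up the edges. We then make it $\mathscr K_4$ by inserting local standard pieces, namely cones over special trees, gluing in small discs and thickening. Each local modification is of the form $K''\searrow K$ with the new cells lying in a regular neighbourhood of the old site, so it is compatible with (1)–(3) by the device described at the start. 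The result still has connected links everywhere but might have non-disc two-cells. We fix this by adding extra triple lines (cutting a two-component by a disc bubble), which again is a collapse-inverse supported in a neighbourhood.

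The main obstacle is the vertex step. Turning an arbitrary connected link graph into $\mathscr K_4$ links by moves that are simultaneously expansions $K''\searrow K$ and embedding-preserving is hard. In particular, when $K$ embeds in a three-manifold, the link of a vertex is planar, and the replacement must be done inside the ball neighbourhood. I would first try Matveev-style local moves realized inside a three-ball neighbourhood: thicken the vertex star to a ball, take a standard spine of that ball relative to its boundary pattern, and check that it collapses to the original star.
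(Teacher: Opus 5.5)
\textbf{There is a genuine gap.} It lies in the ``basic device'' on which all of Step~3 rests: the device runs in the wrong direction for property~(3).

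You argue that if $K''\searrow K$ and $K''\times I^r$ is collapsible, then so is $K\times I^r$, by collapsing $K''\times I^r$ onto $K\times I^r$ first. But a polyhedron onto which a collapsible polyhedron collapses need not be collapsible, because collapsing is not confluent in dimension $\geqslant3$. For example, a suitably triangulated $3$-ball collapses onto Bing's house, which has no free face. So knowing $K''\times I^r\searrow K\times I^r$ and $K''\times I^r\searrow\mathrm{pt}$ tells you nothing about $K\times I^r$, and no such implication is available for products with $I^r$.

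Property~(3) needs the opposite relation. Either the old polyhedron (times $I^r$) must collapse onto the new one, as in your Step~1, or the new polyhedron must be a quotient of the old one by a collapsible piece, so that collapses lift back. This is exactly why the paper uses two kinds of moves. The first is contractions, handled by Lemma~\ref{I-contract-collapse-lem}. The second is the special $T_1,T_2$-extractions, whose admissibility is the nontrivial Lemma~\ref{resol-collaps-lem}: $K\times I\searrow\mathfrak s(K,T_1,T_2)\times I$, a statement that genuinely uses the extra $I$-factor.

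There is also a structural obstruction. If $K''\neq K$ and $K''\searrow K$, then $K''$ has a free face, so $\partial K''\neq\varnothing$. A standard polyhedron has empty boundary, so no move of your type can produce the final standard pieces. In fact your resolution of a degree-$k$ edge is not an expansion at all: $K$ is recovered from the resolved polyhedron by squeezing the inserted discs back onto the edge. That move is an extraction, and for general extractions (2) can fail and (3) is not known. The paper goes the other way round. It contracts a true edge of maximal degree, after a preliminary special extraction if the edge is a loop. This trades edge non-standardness for vertex non-standardness, which is then reduced by special extractions.

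Beyond this, the hard steps are not carried out.

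\emph{Vertex step.} You flag it yourself. In the paper it is driven by the well-ordered complexity $\nu(K)$ and the case analysis of Lemma~\ref{cases-k-lem}: links containing two disjoint cycles, and links homeomorphic to $\mathscr K_{3,3}$. In each case the paper checks that no new true edges of degree $\geqslant4$ appear. It also checks that the collapses and zero-degree edge contractions needed to return to the class $\mathscr C$ do not increase $\nu$.

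\emph{Non-simply-connected $2$-components.} Your ``disc bubble'' does not work as stated. If the bubble's boundary lies in $K$, it bounds a null-homotopic curve of the contractible $K$, so attaching it creates a $2$-sphere and is not a $3$-deformation. Otherwise it leaves a free edge. The paper's Case~(K7) needs a real argument:
\begin{enumerate}
\item take a minimal contractible $P$ with $\partial\mathscr N(P)$ a union of essential curves;
\item reroute $\alpha$ to $\alpha'$ by Matveev moves, which are contractions and special extractions by Lemma~\ref{matmov-lem};
\item in the embeddable case, use the Poincar\'e--Perelman theorem to place $\mathscr N(P)$ in $I^3$ and choose an outermost boundary curve.
\end{enumerate}

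\emph{Steps~1 and~2.} These are fine and match the paper's reduction to polyhedra without boundary or separating vertices. For the connected sum, use $(K_1\#K_2)\times I\searrow(K_1\times[0;\frac12])\cup(K_2\times[\frac12;1])$. Do not rely on a collapse $K_2\times I^r\searrow p_2\times I^r$, which does not follow directly from collapsibility of $K_2\times I^r$.
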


The remainder of the paper is devoted to the proof of this theorem.

\section{Ways to be non-standard}

\begin{lemm}\label{cases-k-lem}
Let~$K$ be a contractible compact two-polyhedron. Then at least one of the following statements
holds\emph:
\begin{itemize}
\item[(K1)]
$K$ is standard\emph;
\item[(K2)]
$\partial K$ is not empty\emph;
\item[(K3)]
$K$ has a separating vertex\emph;
\item[(K4)]
there exists a true edge of~$K$ of degree at least four\emph;
\item[(K5)]
there exists a vertex~$v$ of~$K$ such that the graph~$\mathrm{lk}(v,K)$
contains two disjoint cycles\emph;
\item[(K6)]
there exists a vertex~$v$ of~$K$ such that~$\mathrm{lk}(v,K)$ is
homeomorphic to~$\mathscr K_{3,3}$\emph;
\item[(K7)]
there exists a connected component of~$K\setminus K^{(1)\mathrm t}$ that is not simply connected.
\end{itemize}
\end{lemm}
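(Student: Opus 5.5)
We argue by contraposition: assume (K2)--(K7) all fail and show that~$K$ is standard. This means constructing a cell structure in which true edges have links~$\Theta_3$ and true vertices have links~$\mathscr K_4$. The analysis is local and is organized around the link graph $L=\mathrm{lk}(p,K)$ of a point~$p$.

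\emph{Points off the true 1-skeleton.} Because~$K$ is two-dimensional and $\partial K=\emptyset$, no link has a leaf. A leaf in the link of~$p$ would give points near~$p$ whose neighborhoods are of the form~$B\times I$ with the point at~$B\times 1$, i.e.\ boundary points. Consequently every point of $K\setminus K^{(1)\mathrm t}$ has a circle as its link, and so $K\setminus K^{(1)\mathrm t}$ is a surface. Contractibility forces $K^{(1)\mathrm t}\neq\emptyset$, since otherwise $K$ would be a closed surface, which is never contractible. Hence each component of $K\setminus K^{(1)\mathrm t}$ is an open surface.

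\emph{Points of true edges.} On a true edge the link is $\Theta_k$. Here $k\ne 1$ because there is no boundary, $k\ne 2$ because the point would then not be in the true 1-skeleton, and $k\le 3$ by the failure of (K4). So every true edge has degree~3.

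\emph{Vertices.} Consider a true vertex~$v$ with link $L$. The graph $L$ has no vertices of degree~1 (no boundary). It is connected, since a disconnected link would make~$v$ a separating vertex of a small neighborhood; I would have to check that this local cut point also separates~$K$ globally, or else read (K3) as the local notion, which I expect is the paper's intent. The vertices of~$L$ correspond to germs of true edges or surface sheets, so after suppressing degree-2 vertices every vertex of~$L$ has degree~3. Thus $L$ is a connected trivalent graph, or a circle. By the failure of (K5), $L$ has no two disjoint cycles. The key combinatorial step is to classify such graphs: a connected cubic multigraph with no two vertex-disjoint cycles is $\Theta_3$, $\mathscr K_4$, or $\mathscr K_{3,3}$. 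This follows from Lovász's characterization of graphs without two disjoint cycles, or by direct argument: in a cubic graph a shortest cycle plus the rest quickly yields two disjoint cycles unless there are at most 6 vertices, and those few cases can be enumerated. A link $\Theta_3$ is impossible at a true vertex, because it is the link of an edge point. A link $\mathscr K_{3,3}$ is excluded by the failure of (K6). A circle link means $v$ is not a true vertex. So every true vertex has link $\mathscr K_4$. Loops and multiple edges in $L$ must also be handled: a loop at a trivalent vertex forces a bridge to it, and then two disjoint cycles or a leaf appear.

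\emph{Assembling the cell structure.} It remains to check that $K^{(1)\mathrm t}$ with the true vertices is a graph, i.e.\ that true edges end at true vertices or form circles. A circle component with no vertex needs one auxiliary vertex, whose link is still $\Theta_3$, which is allowed. By the failure of (K7), each component of $K\setminus K^{(1)\mathrm t}$ is simply connected, and we must show it is an open 2-cell. A simply connected open surface is a plane or a sphere. Since $K$ is compact and $K^{(1)\mathrm t}$ is nonempty and attached, the component is an open disc. I expect the main obstacle to be ensuring that its closure is attached cellularly, i.e.\ that the component is the interior of a compact disc mapped onto $K^{(1)\mathrm t}$ by its boundary. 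This should follow from the local structure near true edges and vertices, using regular neighborhoods and compactness. Taking these discs as 2-cells gives a standard cell structure, so (K1) holds. The graph classification step is the most delicate part to write cleanly.
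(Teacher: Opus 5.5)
Your strategy is the paper's: assume (K2)--(K7) fail, show that all true edges have degree three, and reduce each true-vertex link to a connected trivalent graph with no two disjoint cycles. You then identify that link as $\mathscr K_4$ and use the failure of (K7) to turn the complementary regions into 2-cells. (The paper proves the graph classification directly instead of citing Lov\'asz. For a shortest cycle $s$, either $\overline{G\setminus s}$ is not a forest, which gives (K5), or it is a special forest with $m$ leaves. The latter forces $m\le4$ and leaves only $\Theta_3$, $\mathscr K_4$, $\mathscr K_{3,3}$ and the prism graph, which contains two disjoint triangles.)

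\textbf{The gap.} One step fails as written. For a circle component $C$ of $K^{(1)\mathrm t}$ containing no true vertex, you add an auxiliary vertex ``whose link is still $\Theta_3$, which is allowed.'' It is not allowed: by the definition, every 0-cell of a standard complex must have link $\mathscr K_4$. So such a $C$ must be ruled out, and this is where the paper uses contractibility again.
\begin{enumerate}
\item Each open-disc component of $K\setminus K^{(1)\mathrm t}$ whose closure meets $C$ is attached entirely to $C$, because its attaching circle has connected image in $K^{(1)\mathrm t}$.
\item By connectedness, $K$ is then just $C$ with these discs attached.
\item Every point of $C$ has link $\Theta_3$, so the attaching maps are coverings of total multiplicity three.
\item Hence either there are at least two discs and $H_2(K)\ne0$, or there is a single disc of degree $\pm3$ and $H_1(K)\cong\mathbb Z/3$. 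Both contradict contractibility.
\end{enumerate}
The paper phrases this as: a circle component forces $K\cong\mathbb D^2$, hence (K2).

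\textbf{Two smaller remarks.} First, contractibility also settles your doubt about (K3). If $\mathrm{lk}(v,K)$ has $m\ge2$ components while $K\setminus\{v\}$ is connected, then $K$ arises from a connected polyhedron by identifying $m$ points, so $b_1(K)\ge m-1>0$. Thus a local cut point of a contractible $K$ separates it globally; the paper uses this silently.

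Second, the step you single out as the main obstacle is routine. Take a component $W$ of $K\setminus K^{(1)\mathrm t}$. The part of $W$ outside the interior of a regular neighborhood $\mathscr N(K^{(1)\mathrm t})$ is a compact surface homotopy equivalent to $W$, hence a disc. The mapping-cylinder structure of $\mathscr N(K^{(1)\mathrm t})$ then gives the attaching map of its boundary circle. The paper takes this for granted.
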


\begin{proof}
Suppose that~(K7) does not hold.
Then a cell decomposition of~$K$ can be chosen so that~$K^{(1)}=K^{(1)\mathrm t}$.
In this case, since~$K$ is contractible, a connected component of~$K^{(1)}$ cannot be a circle unless~$K\cong\mathbb D^2$.
Indeed, otherwise there would be either two discs in~$K$ with boundaries attached to the same circle, or a single disc
attached by a map of degree different from~$\pm1$. In both cases, this would imply~$H_2(K;\mathbb Z)\not\cong\{0\}$,
which contradicts the contractibility of~$K$.

Thus, if neither~(K2) nor~(K7) holds, then, by choosing a cell decomposition of~$K$ appropriately,
we may assume that~$K^{(1)}=K^{(1)\mathrm t}$ and~$K^{(0)}=K^{(0)\mathrm t}$.
Suppose that, endowed with this cell decomposition, $K$ is not standard.
Then either~$K$ has an edge of degree different from three, or~$K$ has a vertex~$v$
such that~$\mathrm{lk}(v,K)$ is not homeomorphic to~$\mathscr K_4$.
In the former case, Condition~(K4) holds.

Suppose that none of the conditions~(K1)--(K4) and~(K7) holds. This implies that
there exists a vertex~$v\in K$ such that~$\mathrm{lk}(v,K)$ is a connected regular three-valent graph~$G$
different from~$\mathscr K_4$.

Since a finite regular three-valent graph cannot be a tree, $G$ must contain cycles. Let~$s$ be a cycle in~$G$
consisting of the smallest possible number of edges, and let~$m$ be this number.

If the closure~$\overline{G\setminus s}$ is not a forest, then Condition~(K5) holds, and we are done.
Suppose otherwise, that each connected component of~$\overline{G\setminus s}$ is a tree.
Then $\overline{G\setminus s}$ is a special forest.
The number of leaves in~$\overline{G\setminus s}$ is~$m$,
since they are precisely the vertices of~$G$ contained in~$s$.

Since a special tree cannot have exactly one leaf,
we must have~$m\geqslant2$. A special tree has exactly two leaves if and only if it consists of a single edge.
In this case, we would have~$m=2$ and~$G\cong\Theta_3$, which contradicts the assumption that~$v$ is a true vertex.
Therefore, $m\geqslant3$.

In any special tree, there are two leaves connected by a path consisting of at most two edges. Therefore,
there are two distinct vertices in~$s$ that are connected by such a path in~$\overline{G\setminus s}$.
Since, by assumption, $s$ is a shortest
cycle in~$G$, we must have~$m\leqslant 4$.

If~$m=3$, the only available option for~$G$ such that~$\overline{G\setminus s}$ is a special forest
is~$G\cong\mathscr K_4$, which is assumed not to be the case.

In the case where~$m=4$, there are two options: the graph~$\mathscr K_{3,3}$ and
the $1$-skeleton of a triangular prism. In the former case, Condition~(K6) holds, whereas in the latter case,
Condition~(K5) holds.\end{proof}

\section{Transformations of two-polyhedra}

Formally, by a \emph{transformation} of polyhedra we mean a binary relation between polyhedra
where one polyhedron is obtained from another through a specific local change.

\begin{defi}\label{admis-defi}
We call a transformation of polyhedra of dimension at most two \emph{$r$-admissible}, where~$r\in\mathbb N$, if whenever~$K'$ is obtained from~$K$
by this transformation, the following statements hold:
\begin{enumerate}
\item
$K$ and~$K'$ are three-deformable to each other;
\item
if~$K$ is embeddable into a three-manifold, then so is~$K'$;
\item
if~$K'$ is $r$-collapsible, then~$K$ is $r$-collapsible, too.
\end{enumerate}
If a transformation is $r$-admissible for every~$r\in\mathbb N$, it is called \emph{admissible}.
\end{defi}

An example of an admissible transformation is an elementary collapse, which clearly satisfies the properties listed above.
We now introduce two more admissible transformations, namely contractions and special extractions. Together
with elementary collapses, these transformations allow one to transform any contractible two-polyhedron into a semi-standard one, as
will be shown in Section~\ref{standardtn-sec}.

\subsection{Contractions}
For a PL map~$\varphi$, we denote by~$\mathrm{Cyl}_\varphi$ the mapping cylinder of~$\varphi$.
When~$L$ is a subpolyhedron of a compact polyhedron~$K$, we denote by~$\mathscr N(L)$
a regular neighborhood of~$L$ in~$K$ and assume that it is endowed with
the mapping cylinder structure of a projection~$\pi_L:\partial\mathscr N(L)\rightarrow L$,
so that~$\mathscr N(L)\cong\mathrm{Cyl}_{\pi_L}$.
When~$Y$ is a subcomplex of a complex~$X$, by~$\mathscr N(Y)$
we mean a regular neighborhood of~$Y$ in~$X$ such that the intersection of~$\mathscr N(Y)$ with each cell~$\sigma$ of~$X$
not contained entirely in~$Y$ is of the form~$(\partial\mathscr N(Y)\cap\sigma)\times\left[0;1\right)$
with respect to the mapping cylinder structure of~$\mathscr N(Y)$.
This can always be achieved by choosing~$\mathscr N(Y)$ sufficiently small and choosing~$\pi_L$ appropriately.

\begin{defi}\label{contract-defi}
Let~$K$ be a compact polyhedron, and let~$L$ be a subpolyhedron of~$K$. Let also~$f:L\rightarrow L'$
be a surjective PL map from~$L$ to another polyhedron.
Then the polyhedron
\begin{equation}\label{Kcontracted-eq}
K'=\overline{K\setminus\mathscr N(L)}\cup_{\partial\mathscr N(L)}\mathrm{Cyl}_{f\circ\pi_L}
\end{equation}
is said to be obtained from~$K$ by \emph{contracting~$L$ to~$L'$} (\emph{via~$f$}).

If, additionally, $L'$ is a single point and~$L$ is collapsible (or, more specifically, $L\cong I^k$ for some~$k\in\mathbb N$),
then we say that~$K'$ is obtained from~$K$ by a \emph{contraction} (respectively, \emph{disc contraction}).
The inverse passage, from~$K'$ to~$K$, is then called an \emph{extraction}.
\end{defi}

Topologically, the polyhedron~\eqref{Kcontracted-eq} can be identified with the quotient space~$K/{\sim}_f$,
where the equivalence relation~$\sim_f$ is as follows:
$$x\sim_f x'\quad\text{if and only if}\quad x=x'\quad\text{or}\quad x,x'\in L\text{ and }f(x)=f(x')$$
(with the sole caveat that the natural projection~$K\rightarrow K/{\sim}_f$ is \emph{not} a PL map).

\begin{lemm}\label{I-contract-collapse-lem}
Let~$K$, $L$, $L'$, and~$f$ be as in Definition~\ref{contract-defi}, and let~$K'$ be obtained from~$K$
by contracting~$L$ to~$L'$ via~$f$. Suppose that~$f:L\rightarrow L'$ is a trivial $I$-fibration
and~$K'$ is collapsible. Then~$K$ is also collapsible.
\end{lemm}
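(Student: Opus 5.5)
The plan is to realize the passage from~$K'$ back to~$K$ as a collapse of a larger complex that sits above both. Since~$f:L\to L'$ is a trivial $I$-fibration, we may identify~$L$ with~$L'\times I$ so that~$f$ is the projection onto the first factor. Then~$K$ is obtained from~$K'$ by replacing the copy of~$L'$ with~$L'\times I$, and replacing the mapping cylinder~$\mathrm{Cyl}_{f\circ\pi_L}$ with~$\mathrm{Cyl}_{\pi_L}=\mathscr N(L)$.

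The first step is to construct a polyhedron~$M$ such that~$M\searrow K$ and~$M\searrow K''$, where~$K''$ is a polyhedron that collapses whenever~$K'$ does. The natural candidate is
$$M=K\cup_{L}\mathrm{Cyl}_f,$$
that is, $K$ with the mapping cylinder of~$f$ glued along~$L$. Since~$f$ is the projection~$L'\times I\to L'$, the cylinder~$\mathrm{Cyl}_f$ is~$L'\times I\times I$ with~$L'\times I\times\{1\}$ squashed along the middle~$I$-factor onto~$L'$. Equivalently, $\mathrm{Cyl}_f\cong L'\times T$, where~$T$ is a triangle, and~$L$ corresponds to~$L'\times e$ for one edge~$e$ of~$T$.

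Next I check the two collapses. A triangle collapses onto any one of its edges, and the product of such a collapse with~$L'$ is again a collapse. Hence~$L'\times T\searrow L'\times e=L$, which gives~$M\searrow K$. In the other direction I would show that~$M$ collapses onto a polyhedron homeomorphic to~$K'$. To do this, extend the cylinder structure over~$\mathscr N(L)$: the regular neighbourhood of~$\mathrm{Cyl}_f$ in~$M$ is~$\mathrm{Cyl}_f\cup\mathscr N(L)$, which fibres over~$T$-coordinates via~$\pi_L$. Collapsing~$T$ onto the union of its two other edges, fibrewise over~$L'$ and compatibly with~$\pi_L$ on the collar, turns~$\mathscr N(L)\cup\mathrm{Cyl}_f$ into a copy of~$\mathrm{Cyl}_{f\circ\pi_L}$. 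Concretely, one collapses the collar~$(\partial\mathscr N(L)\cap\sigma)\times[0;1)$ pieces along with the triangle.

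This is the main obstacle: producing the collapse~$M\searrow K'$ explicitly. The approach I would try first is to treat the collapse as "$L\times I$ collapsing onto~$L'\times(\text{two edges})$", performed cell by cell in order of decreasing dimension over~$L'$. The required compatibility with the cells of~$\overline{K\setminus\mathscr N(L)}$ is guaranteed by the product form of~$\mathscr N(L)\cap\sigma$ assumed in the preceding conventions. If this direct route becomes messy, the fallback is to replace~$M\searrow K'$ by~$M\searrow K'\cup D$, where~$D$ is a collapsible piece attached along a collapsible set, and then use~$K'\searrow\mathrm{pt}$.

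Finally I conclude. The collapse~$K'\searrow\mathrm{pt}$ lifts to a collapse of~$M$: first~$M\searrow K'$, then~$K'\searrow\mathrm{pt}$. So~$M$ is collapsible. We also have~$M\searrow K$, but collapsibility of~$M$ together with~$M\searrow K$ does not immediately give collapsibility of~$K$, so this step needs care. Here I would reverse the roles. Use~$f$ being a \emph{trivial} $I$-fibration to see directly that~$K\cong K'$ with~$L'$ thickened, and that~$K$ collapses onto a copy of~$K'$. Specifically, $L=L'\times I$ collapses onto~$L'\times\{0\}$ across the product cells, provided the cells of~$K$ incident to~$L$ can be pushed along. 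Using~$\mathscr N(L)\cong\mathrm{Cyl}_{\pi_L}$, one collapses~$\mathscr N(L)$ onto~$\mathrm{Cyl}_{f\circ\pi_L}$ via the collapse of~$I\times[0;1]$ onto~$I\times\{0\}\cup\{0\}\times[0;1]$ performed in each fibre. Then~$K\searrow K'\searrow\mathrm{pt}$, which proves that~$K$ is collapsible.
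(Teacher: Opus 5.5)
Your proposal has a genuine gap in its final step. The claim that $K$ collapses onto a subpolyhedron homeomorphic to $K'$ is false in general. A contraction is a quotient: cells of $K$ that meet an $I$-fibre of $f$ at different heights become incident to a common point of $K'$. No collapse of $K$ can produce that, so $K'$ need not embed in $K$ at all.

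Here is a concrete instance. Let $K$ be a regular neighbourhood of an edge $e$ joining two distinct standard vertices of a standard polyhedron, let $L=e$, and let $L'$ be a point.
\begin{itemize}
\item $K\searrow e$, so $K$ is collapsible.
\item $K'$ is the cone over the $1$-skeleton of a triangular prism, so $K'$ is collapsible and the lemma applies.
\item The apex of $K'$ has a link with first Betti number $4$, whereas every link in $K$ has first Betti number at most $3$.
\item A link in a subpolyhedron is a subgraph of the corresponding link in $K$, and $b_1$ is monotone under subgraphs.
\end{itemize}
Hence no subpolyhedron of $K$ is homeomorphic to $K'$. The same count applies to $M=K\cup_L\mathrm{Cyl}_f$: attaching the triangle along $e$ raises no link's $b_1$ above $3$. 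So your first route already breaks at the claimed collapse of $M$ onto a copy of $K'$, independently of the issue you noticed yourself.

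The fibrewise model is also incorrect. Over $y\in L'$, the neighbourhood $\mathscr N(L)$ is the mapping cylinder of $\pi_L^{-1}(y\times I)\to y\times I$, not a square $I\times[0;1]$. The corresponding piece of $\mathrm{Cyl}_{f\circ\pi_L}$ is a cone, not the arc $I\times\{0\}\cup\{0\}\times[0;1]$.

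The missing idea is to pull the collapse of $K'$ back along the projection $p:K\to K'=K/{\sim}_f$, rather than looking for $K'$ inside $K$.
\begin{itemize}
\item Choose a collapsible cell structure on $K'$ in which $L'$ is a subcomplex.
\item Give $K$ the cell structure in which $p^{-1}(\sigma)$ is a single cell when $\sigma\cap L'=\varnothing$, and a product $\sigma\times I$ (a union of cells) when $\sigma\subset L'$.
\item For each elementary collapse $K'_{i-1}\stackrel{\sigma_i,\eta_i}\searrow K'_i$, show that $p^{-1}(K'_{i-1})\searrow p^{-1}(K'_i)$.
\end{itemize}
This is immediate when both cells avoid $L'$. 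It is also immediate when both lie in $L'$, since you may treat $p^{-1}(\sigma_i)$ and $p^{-1}(\eta_i)$ as single cells.

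The only real case is $\eta_i\subset L'$, $\sigma_i\cap L'=\varnothing$. Here $\eta=\overline{p^{-1}(\sigma_i)}\cap p^{-1}(\eta_i)$ is a cell meeting every $I$-fibre in a connected set. So $\overline{p^{-1}(\eta_i)}$ collapses onto $p^{-1}(\dot\eta_i)\cup\eta$, after which $\overline{p^{-1}(\sigma_i)}$ collapses through the now free $\eta$. The sequence ends at $p^{-1}(\mathrm{pt})$, which is a point or an arc, hence collapsible.
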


\begin{proof}
We endow~$K'$ with an appropriate collapsible cell structure
such that~$L'$ is a union of cells, and view it as a complex.
Accordingly, we endow~$K$ with a cell decomposition such that, for each cell~$\sigma$ of~$K'$,
the preimage~$p^{-1}(\sigma)$ is a cell of~$K$ provided that~$\sigma\cap L'=\varnothing$,
and a union of cells of~$K$ otherwise, where~$p:K\rightarrow K'=K/{\sim}_f$ is the natural projection.

Let
$$K'=K_0'\stackrel{\sigma_1,\eta_1}\searrow K_1'\stackrel{\sigma_2,\eta_2}\searrow
K_2'\stackrel{\sigma_3,\eta_3}\searrow\ldots\stackrel{\sigma_m,\eta_m}\searrow K_m'=\{\mathrm{pt}\}$$
be a sequence of elementary collapses. Define~$K_i\subset K$ to be~$p^{-1}(K_i')$ with the cell decomposition
inherited from~$K$.

If~$\sigma_i\cap L'=\eta_i\cap L'=\varnothing$, then~$K_i$ is obtained from~$K_{i-1}$ by an elementary collapse.
If~$\sigma_i,\eta_i\subset L'$, then~$K_{i-1}$ collapses to~$K_i$, since one can view each of~$p^{-1}(\sigma_i)$
and~$p^{-1}(\eta_i)$ in~$K_{i-1}$ as a single cell, which makes the transition from~$K_{i-1}$ to~$K_i$
an elementary collapse.

Now suppose that~$\eta_i\subset L'$ and~$\sigma_i\cap L'=\varnothing$. The intersection~$\eta=\overline{p^{-1}(\sigma_i)}\cap p^{-1}(\eta_i)$ meets every fiber of the $I$-fibration~$p^{-1}(\eta_i)\rightarrow\eta_i$
in a connected subset, and~$\eta$ is topologically a cell.
Therefore, $\overline{p^{-1}(\eta_i)}$ collapses to~$p^{-1}(\dot\eta_i)\cup\eta$, and~$\overline{p^{-1}(\sigma_i)}$
collapses to~$p^{-1}(\dot\sigma_i\setminus\eta_i)$. This implies that~$K_{i-1}\searrow K_i\cup\overline{p^{-1}(\sigma_i)}\searrow K_i$.

Thus, in all cases, $K_{i-1}$ collapses to~$K_i$, which means that~$K$ is collapsible.
\end{proof}

The following two lemmas demonstrate that a contraction is an admissible transformation.

\begin{lemm}
Let~$K'$ be a polyhedrn obtained from a compact polyhedron~$K$ by a contraction. Suppose that~$K'$ is $r$-collapsible for some~$r\in\mathbb N$. Then~$K$ is
also $r$-collapsible.
\end{lemm}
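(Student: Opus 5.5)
Let $L\subset K$ be the collapsible subpolyhedron contracted to a point, so $K'$ is obtained by contracting $L$ to a point. The plan is to reduce the statement to Lemma~\ref{I-contract-collapse-lem} applied to the thickened polyhedra $K\times I^r$ and $K'\times I^r$. The difficulty is that the contraction map $L\to\mathrm{pt}$ is not an $I$-fibration, so we will factor it into a sequence of contractions, each of which is, after crossing with $I^r$ or after an auxiliary collapse, a contraction via a trivial $I$-fibration.

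\textbf{Step 1: make $L$ a cube.} Since $L$ is collapsible, a regular neighbourhood $N$ of $L$ in $K$ is a regular neighbourhood of a collapsible polyhedron. We would first show that contracting $L$ and contracting a slightly larger collapsible set give homeomorphic results, up to collapses. The intended route is to replace $L$ by a collapsible subpolyhedron of the form $L\times\{0\}$ inside $K\times I^r$ with $r$ large, or better, to observe the following. $L\times I^r$ is a collapsible polyhedron in $K\times I^r$, and $K'\times I^r$ is obtained from $K\times I^r$ by contracting $L\times I^r$ to $\mathrm{pt}\times I^r$ via the projection.

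\textbf{Step 2: factor the contraction using the collapse of $L$.} Write the collapse $L\searrow\mathrm{pt}$ as a sequence of elementary collapses $L=L_0\searrow L_1\searrow\dots\searrow L_m=\mathrm{pt}$. Each step removes a cell $\sigma_i$ together with its free face. The closure $\overline{\sigma_i}$ is a ball $B_i$ meeting $L_i$ in a face ball $D_i$, so that $B_i\cong D_i\times I$ with $D_i\times\{0\}=B_i\cap L_i$. Contracting $B_i$ onto $D_i$ along the $I$-direction is then a contraction via a trivial $I$-fibration. We would check that the composite of these successive contractions, in the order $i=1,\dots,m$, coincides up to PL homeomorphism with contracting $L$ to a point. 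This uses the mapping-cylinder description in Definition~\ref{contract-defi} and the fact that contracting in stages agrees with contracting once. However, each stage contracts $B_i$ onto a subset $D_i\ne$ point, which is exactly the setting of Lemma~\ref{I-contract-collapse-lem}. Thus $K=K_0,K_1,\dots,K_m=K'$, where $K_{i}$ is obtained from $K_{i-1}$ by contracting (the image of) $B_i$ onto $D_i$ via a trivial $I$-fibration.

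\textbf{Step 3: cross with $I^r$ and apply Lemma~\ref{I-contract-collapse-lem}.} The product of a contraction via a trivial $I$-fibration $f$ with $I^r$ is again such a contraction, namely via $f\times\mathrm{id}_{I^r}$, which is still a trivial $I$-fibration. Hence $K_i\times I^r$ is obtained from $K_{i-1}\times I^r$ by a contraction of the required type. Descending induction, starting from the collapsibility of $K'\times I^r$, then gives the collapsibility of each $K_{i}\times I^r$, and in particular of $K\times I^r$.

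\textbf{Main obstacle.} The hardest part will be Step 2: verifying carefully that the staged contractions reproduce $K'$, in particular that the regular neighbourhoods and mapping-cylinder structures match. It must also be checked that after earlier stages the image of $B_i$ is still a ball fibred over $D_i$, even though earlier contractions may identify points of $\partial B_i$ outside $D_i$. Because the collapse removes cells in order, points of $B_i$ are only ever identified within $D_i$ or its images, so the fibration should remain trivial. If this fails, the fallback is to perform the contractions in $K\times I^r$ directly and to use Lemma~\ref{I-contract-collapse-lem} only locally on each cell pair.
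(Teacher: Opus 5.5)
\paragraph{Verdict.} Your argument is correct. It uses the same basic mechanism as the paper's proof: factor the contraction into contractions via trivial $I$-fibrations, cross with $I^r$, and apply Lemma~\ref{I-contract-collapse-lem} repeatedly. The factorization, however, is different.

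\paragraph{Comparison with the paper.} The paper works in two stages. It handles a disc contraction of $L\cong I^k$ by induction on $k$: the contraction is written as the projection $I^k\to I^{k-1}$ (a trivial $I$-fibration) followed by a disc contraction of lower dimension. It then decomposes an arbitrary contraction into disc contractions.

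You bypass disc contractions entirely. Each elementary collapse $L_{i-1}\searrow L_i$ directly supplies the trivial $I$-fibration $B_i\cong D_i\times I\to D_i$, so a single pass along the collapse sequence suffices.

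Your route is shorter and makes the product with $I^r$ explicit, which the paper leaves implicit. The paper's route isolates disc contractions, which are the moves actually used later in Lemma~\ref{main-lem}. Both routes need the same routine check: that the staged contractions yield a polyhedron PL homeomorphic to the single contraction.

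\paragraph{Cleanups.}
\begin{itemize}
\item Step~1 plays no role and should be deleted; you never need $L$ to be a cube.
\item To guarantee $(\overline{\sigma_i},D_i)\cong(D_i\times I,D_i\times 0)$, take a triangulation of $L$ that collapses simplicially. Cells in the paper's sense need not be regular, so this is not automatic.
\item The reason the image of $B_i$ in $K_{i-1}$ is still a ball is simpler than your description. For $j<i$, the $j$-th contraction moves only points of $\sigma_j\cup\eta_j$, and this set is disjoint from $L_{i-1}\supset B_i$. It also identifies no two points of $L_j$, since it is the identity on $D_j=B_j\cap L_j$. Inductively, $L_{i-1}$ embeds in $K_{i-1}$, so the fallback you mention is unnecessary.
\end{itemize}
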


\begin{proof}
For a disc contraction, this statement follows from
Lemma~\ref{I-contract-collapse-lem} by induction on the dimension of the contracted disc.
An arbitrary contraction can be decomposed into a sequence of disc contractions,
which yields the general case.
\end{proof}

\begin{lemm}
Let~$K'$ be a polyhedron obtained from a compact polyhedron~$K$ by a contraction. Then the following statements hold\emph:

\noindent\emph{(i)}
$K$ and~$K'$ are three-deformable to each other\emph;

\noindent\emph{(ii)}
if~$K$ embeds into a three-manifold, then so does~$K'$.
\end{lemm}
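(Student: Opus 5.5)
As in the proof of the previous lemma, I would first reduce to disc contractions. An arbitrary contraction of a collapsible $L$ to a point can be decomposed into a sequence of disc contractions. Both properties (i) and (ii) are transitive, since three-deformability is an equivalence relation and embeddability passes along a chain. So it suffices to treat the case $L\cong I^k$. Since $K$ has dimension at most two (the contractions we use occur within two-polyhedra), $L$ is a subpolyhedron of dimension at most two. Moreover, a disc $I^k$ with $k\ge 1$ is contracted by collapsing one $I$-factor at a time, $I^k\to I^{k-1}\to\cdots\to\mathrm{pt}$. Hence it is enough to handle a contraction along a trivial $I$-fibration $f:L\to L'$ with $L\cong L'\times I$, where $L'$ is a point, an arc, or (in the two-dimensional case) a disc.

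For (i), the plan is to produce a common polyhedron of dimension at most three that collapses onto both $K$ and $K'$. Take $M=\mathrm{Cyl}_f\cup_L K$, i.e.\ glue the mapping cylinder of $f:L\to L'$ to $K$ along $L$. Since $L\cong L'\times I$ and $f$ is the projection, $\mathrm{Cyl}_f\cong L'\times\Delta^2$ (a triangle fibred over $L'$, with $L$ corresponding to $L'\times$ one edge). Its dimension is $\dim L'+2\le 3$, because $\dim L'\le 1$ whenever $\dim L\le 2$.
\begin{itemize}
\item $\mathrm{Cyl}_f$ collapses onto $L$ (the remaining two edges of the triangle collapse across it), so $M\searrow K$.
\item $M$ also collapses onto $K'$. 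Using the mapping cylinder structure $\mathscr N(L)\cong\mathrm{Cyl}_{\pi_L}$, the region $\mathscr N(L)\cup\mathrm{Cyl}_f$ is homeomorphic, rel $\partial\mathscr N(L)$, to $\mathrm{Cyl}_{f\circ\pi_L}$ with a collar attached. One then collapses the triangle fibres from the free side: $L'\times\Delta^2$ collapses onto $L'\times(\text{the two edges other than }L)$, and this union, together with $\mathscr N(L)$, becomes $\mathrm{Cyl}_{f\circ\pi_L}$ after reparametrization.
\end{itemize}
The main obstacle is making this second collapse precise in the PL category, i.e.\ checking that $\overline{K\setminus\mathscr N(L)}\cup\mathscr N(L)\cup\mathrm{Cyl}_f\searrow K'$. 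I would do this fibrewise over $L'$, using that everything is a product with $L'$ near $L$. Over a point $y\in L'$, the fibre picture is: the cone (via $\pi_L$) over $\pi_L^{-1}(y\times I)$, glued to a triangle along $y\times I$. This two-dimensional piece collapses onto the cone over $\pi_L^{-1}(y\times I)$ with apex $y$. Here I would invoke the standard fact that a collapse which is a product with $L'$ can be performed cell by cell; if needed, the cell structures can be chosen as in the proof of Lemma~\ref{I-contract-collapse-lem}.

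For (ii), suppose $K\subset N^3$, a three-manifold. Take a regular neighbourhood $R$ of the disc $L$ in $N$; it is a three-ball, and $K\cap R=\mathscr N(L)$ is a cone from $\partial R$. I would define a PL self-map-like re-embedding inside $R$: replace the cone $\mathscr N(L)$ by $\mathrm{Cyl}_{f\circ\pi_L}$, realized as the cone from the centre of $R$ on $K\cap\partial R$, after first shrinking $L$ to a point. Concretely, $\mathrm{Cyl}_{f\circ\pi_L}$ is the cone over $\partial\mathscr N(L)=K\cap\partial R$ with apex the point $L'$ (in the disc contraction to a point). Any finite graph in $\partial R\cong S^2$ cones into the ball $R$, so $K'=(K\setminus \mathrm{int}\,R)\cup \mathrm{cone}(K\cap\partial R)$ embeds in $N$. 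For the intermediate $I$-fibration steps, the same argument works fibrewise. Alternatively, I would avoid those steps in (ii) by doing the contraction to a point directly, since $L$ is a collapsible subpolyhedron of $N$ and therefore has a ball regular neighbourhood.
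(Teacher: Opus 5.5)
\textbf{Part (ii) is fine. Part (i) has a genuine gap.} Your argument for (ii) is essentially the paper's. As the paper does, first add a collar to $N$ so that $L$ misses $\partial N$; then the regular neighbourhood $R$ is a ball, and the cone on $K\cap\partial R$ sits inside it.

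In (i), the polyhedron $M=K\cup_L\mathrm{Cyl}_f$ does not in general collapse onto anything homeomorphic to $K'$. Your claim that $\mathscr N(L)\cup_L\mathrm{Cyl}_f$ is homeomorphic rel $\partial\mathscr N(L)$ to a collared $\mathrm{Cyl}_{f\circ\pi_L}$ is false. Also, collapsing $L'\times\Delta^2$ onto the two edges other than $L$ would require $L=L'\times e_0$ to be free, but $L$ is a face of the two-cells of $K$ around it.

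In fact the construction cannot be rescued. Take $K\in\mathscr C$ standard, $L$ the closure of a true edge joining two distinct true vertices, and $L'$ a point.
\begin{itemize}
\item Consider any sequence of elementary collapses of (a subdivision of) $M$, and the first step that removes a cell of $K$. Its free face $\eta$ lies in $K$, because if $\sigma\subset K$ then $\eta\subset\overline\sigma\subset K$.
\item A face in $K$ that is free in a polyhedron containing $K$ is free in $K$. This is impossible since $\partial K=\varnothing$, so every polyhedron onto which $M$ collapses contains $K$.
\item Such a polyhedron therefore has at least as many true vertices as $K$, since their links still contain $\mathscr K_4$.
\item But $K'$ has one true vertex fewer: the two ends of $L$ merge into a single vertex whose link is the prism graph.
\end{itemize}
The same example refutes your fibrewise claim. $\mathrm{Cyl}_{\pi_L}$ is not in general a cone over its frontier, and attaching a fin along $L$ does not make it one.

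\textbf{The missing idea} is to attach something over all of $\mathscr N(L)$, not over $L$ only. The paper takes $K''=K\cup C\mathscr N(L)$, the cone on $\mathscr N(L)$ attached along its base, which has dimension at most three.
\begin{itemize}
\item $K''$ collapses onto $K$ because $\mathscr N(L)$ is collapsible.
\item $K''$ collapses onto $\overline{K\setminus\mathscr N(L)}\cup C(\partial\mathscr N(L))\cong K'$. This uses the fact that for any subcomplex $X$ of a complex $Y$, the cone $CY$ collapses onto $CX$: remove each $C\sigma$ through its base face $\sigma$, for the cells $\sigma$ of $Y$ not in $X$, in order of decreasing dimension.
\item For $Y=\mathscr N(L)$ these base faces are also free in $K''$. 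Every cell of $K$ having such a $\sigma$ in its boundary lies in $\mathscr N(L)$ and has higher dimension, so it has already been removed.
\end{itemize}
This handles an arbitrary contraction to a point directly, so your reduction to disc contractions and $I$-fibrations is unnecessary for both parts.
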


\begin{proof}
(i) Denote by~$L$ the collapsible subpolyhedron in~$K$ that contracts to a point to yield~$K'$.
Let~$K''$ be the polyhedron obtained from~$K$ by attaching a cone over~$\mathscr N(L)$.
Then~$K''$ contains both~$K$ and~$K'$ and collapses to each of them.

(ii) Suppose that~$K$ is embedded into a three-manifold~$M$. By adding a collar to~$M$,
we may assume that~$\partial M\cap K=\varnothing$. Then~$K'$ is embedded
into the three-manifold obtained from~$M$ by contracting~$L$ to a point.
\end{proof}

\subsection{Special extractions}

An extraction, which is the inverse of a contraction, is not in general an admissible transformation, since
it can transform a spine of a three-manifold into a polyhedron that is not embeddable into
a three-manifold. However, there is a specific kind of extraction that is admissible. We define it below.

\begin{rema}
Property~(2) in Definition~\ref{admis-defi} is important only for establishing Theorem~\ref{spines-th}.
To prove Theorem~\ref{main-th}, one can use any transformations that satisfy only
Properties~(1) and~(3). However, the author is not aware of
any type of transformation for which one can prove~(3) without Property~(2) holding true.
\end{rema}

\begin{defi}\label{resol-def}
Let~$K$ be a two-dimensional polyhedron, and
let~$v\in K\setminus\partial K$. (The construction works equally well when~$v\in\partial K$,
but the definition in that case requires a more complicated system of notation.)
We denote by~$V$ a regular neighborhood~$\mathscr N(v)$ of~$v$,
and by~$G$ the link of~$v$ in~$K$. We have~$\partial V\cong G$.

Choose a decomposition of~$G$ into a union of two subpolyhedra~$G_1$ and~$G_2$
such that
\begin{equation}\label{dG1=dG2-eq}
G_1\cap G_2=\partial G_1=\partial G_2
\end{equation}
and~$G_1\cap G_2$ is contained in a single connected component of~$G_i$ for $i=1,2$.
In~$G_1$ and~$G_2$,
choose contractible subpolyhedra (that is, trees) $T_1$ and~$T_2$, respectively,
such that~$\partial T_1=\partial T_2=\partial G_1$. Observe that Condition~\eqref{dG1=dG2-eq}
implies that the points in~$G_1\cap G_2$ are \emph{not} true vertices of~$G$.

Take the disjoint union of the cones over~$G_1\cup T_2$, $T_1\cup T_2$, and $T_1\cup G_2$,
and glue the first two along~$T_2$ and the last two along~$T_1$. Let~$M$ be the
resulting polyhedron. We can see that~$M$ is collapsible and~$\partial M=G_1\cup G_2=G$.

We denote by~$\mathfrak s(K,T_1,T_2)$ the polyhedron obtained from~$K$ by removing~$V\setminus\partial V$
and gluing in~$M$ along~$\partial M\cong\partial V$ instead. (Observe that if~$G$ is connected, then~$G_1$ and~$G_2$ are uniquely determined by~$T_1$
and~$T_2$; for this reason we omit them from the notation.) We say that the passage from~$K$ to~$\mathfrak s(K,T_1,T_2)$
is a \emph{$T_1,T_2$-extraction at~$v$}.
\end{defi}

One can see that the contraction of~$M$ to a point turns~$\mathfrak s(K,T_1,T_2)$ into a polyhedron
homeomorphic to~$K$. Thus, a $T_1,T_2$-extraction is indeed an extraction. We call extractions of this
form \emph{special} when~$T_1$ and~$T_2$ are not specified.

The following two lemmas demonstrate that a special extraction is an admissible transformation.

\begin{lemm}
If a compact two-polyhedron~$K$ embeds into a three-manifold and~$K'$ is obtained from~$K$
by a special extraction, then~$K'$ also embeds into
a three-manifold.
\end{lemm}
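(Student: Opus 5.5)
The plan is to work locally near $v$. Assume $K\subset N$ with $N$ a three-manifold. As in the contraction lemma, first add a collar to $N$ so that $\partial N\cap K=\varnothing$. Then a regular neighbourhood of $v$ in $N$ is a $3$-ball $B$ with $B\cap K=V$. Here $V$ is the cone over $G=\mathrm{lk}(v,K)$, and $G\cong\partial V$ sits in the sphere $S=\partial B$. Outside $B$ nothing changes. It therefore suffices to show that the polyhedron $M$ from Definition~\ref{resol-def} embeds in $B$ with $M\cap\partial B=\partial M=G$, extending the given embedding $G\subset S$. Then $K'=\mathfrak s(K,T_1,T_2)=\overline{K\setminus V}\cup_G M$ embeds in $N$.

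To embed $M$, use that $G\subset S^2$ is a planar graph and that $G_1,G_2$ meet only in the finite set $P=G_1\cap G_2$ of non-vertex points. I would realize $M$ as three cones in three nested balls. Pick the interior points $c_1,c_0,c_2$ of $B$, to be the cone points of $C(G_1\cup T_2)$, $C(T_1\cup T_2)$, $C(T_1\cup G_2)$. Concretely, first choose a properly embedded disc-like separating surface, or better an auxiliary sphere. The cleanest route seems to be the following. Take a smaller concentric sphere $S'\subset B$ and a copy of $G$ on it under radial projection. Cone $G$ radially in the collar $S\times[0,1]$ only partially, pushing $G_1$ and $G_2$ into different regions. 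Then compose with an isotopy: since $T_i\subset G_i$ is a tree containing $P$, one can inside $S\times[0,1]$ "pull" $G_1$ toward $c_1$ while sweeping it through the tree $T_1$.

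A more direct description would also work. The cone $C(G_1\cup T_2)$ with apex $c_1$ is the same as the cone $C(G_1)$ with the cone $C(T_2)$ glued along $C(P)$. So $M$ is obtained as follows: take the cone on $G$ from a point $c$, but replace the cone point by the $1$-dimensional tree-like spine $C(T_1\cup T_2)$ (a cone on a graph $T_1\cup T_2$). That is, $M$ is the cone on $G$ in which the apex is "blown up" into the $2$-complex $C(T_1\cup T_2)$. I would embed $C(T_1\cup T_2)$ first in a small ball $B_0$ around $c$. Since $T_1\cup T_2$ is a graph in $G\subset S^2$, it is planar, and its cone embeds in $B_0$ as the cone over its copy on $\partial B_0$. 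Then I would connect $\partial B_0$ to $S$ by the product $G\times I$ suitably modified: $G_1\times I$ runs toward the apex $c_1$, and $T_1\subset G_1$ is identified with $T_1$ on the $c_0$-cone. This also requires verifying, via the tree property, that the cone over a tree is a disc-like collapsible piece that can be flattened. The pieces $C(G_1)$, $C(G_2)$ can then be placed on the two sides of a properly embedded disc $D\subset B$ containing $C(T_1\cup T_2)$, where the cones over $T_1$ and $T_2$ attach from opposite sides.

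I expect the main obstacle to be producing this disc $D$. I need a neighbourhood in $S$ of the trees $T_1\cup T_2$ that separates $G_1\setminus T_1$ from $G_2\setminus T_2$ appropriately. The plan is to take a regular neighbourhood $U$ of $T_1\cup T_2$ in $S$ and use the hypothesis that $P$ lies in a single component of each $G_i$. Together with \eqref{dG1=dG2-eq}, this should let me arrange the edges of $G_1$ leaving $T_1$ to approach from one side and those of $G_2$ from the other, possibly after an isotopy of $G$ in $S$ that moves the components of $G_i$ not meeting $P$. If this local side-separation fails, I would instead thicken $B$ to a handlebody.
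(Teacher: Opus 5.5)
Your reduction (collar $N$, take a ball $B$ around $v$ with $B\cap K=V$, and embed $M$ in $B$ with $M\cap\partial B=G$) is exactly the paper's first step. After that, however, the proposal never produces an embedding, and the construction it settles on fails as stated.

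\emph{The single disc cannot work.} A properly embedded disc $D$ cannot contain $C(T_1\cup T_2)$. As soon as $|J|\geqslant3$ (e.g.\ $T_1\cong\mathscr S_d$ in Case~(K4), or the H-shaped trees of a $\slbf T_1$-move), the graph $T_1\cup T_2$ has a vertex of degree $\geqslant3$, so its cone has an edge of degree $\geqslant3$.

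\emph{The gluing in $M$ is misread.} The cone on $G_1$ meets the middle cone along a copy of $T_2$, not $T_1$. Moreover, the trees along which the three cones are glued are interior copies, not the subtrees $T_i\subset G_i\subset\partial M$. Identifying $T_1\subset G_1$ with the middle cone's $T_1$ changes the boundary.

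\emph{The missing idea is a lens of two discs.}
\begin{enumerate}
\item Choose a circle in $S$ meeting $G$ exactly in $J$ and splitting $S$ into discs $H_1\supset G_1$ and $H_2\supset G_2$.
\item Take discs $D_1,D_2\subset B$ with $D_1\cap D_2=D_i\cap S=\partial H_1$. They cut $B$ into balls with $\partial B_1=H_1\cup D_2$, $\partial B_2=D_1\cup D_2$, $\partial B_3=D_1\cup H_2$.
\item Move $T_i$ onto $D_i$ by homeomorphisms $h_i\colon H_i\to D_i$ fixed on $\partial H_i$.
\item Cone $G_1\cup h_2(T_2)$, $h_1(T_1)\cup h_2(T_2)$ and $h_1(T_1)\cup G_2$ from interior points of $B_1$, $B_2$ and $B_3$ respectively.
\end{enumerate}

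\emph{The separating circle is easy.} You call it the main obstacle, but it has nothing to do with a neighbourhood of $T_1\cup T_2$. When $G$ is connected, each $G_i$ is connected, since a component of $G_i$ missing $J$ would be a component of $G$. Hence $G_2\setminus J$ lies in a single complementary disc of $G_1$, and a circle running just inside that disc through the points of $J$ works. Moving components of $G_i$ relative to the rest of $G$ inside $S$ is not allowed anyway: the embedding $G\subset S$ is fixed by $\overline{K\setminus V}$.

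\emph{The disconnected-link case is missing, and there a purely local argument is false.} Let $K\subset\mathbb R^3$ be three nested $2$-spheres touching pairwise only at one common point $v$. Then $G$ consists of three parallel circles $X_1,X_2,X_3$ on $S$, with $X_2$ in the middle. Take $G_1=X_1\cup X_3$, $G_2=X_2$, and $T_1,T_2$ single points; all conditions of Definition~\ref{resol-def} hold with $J=\varnothing$. The cone on $X_2$ would be a properly embedded disc in $B$ separating $X_1$ from $X_3$. The cone on $X_1\cup X_3$ is connected, meets both circles, and must be disjoint from it. So $M$ does not embed in $B$ rel $G$, and one has to change the ambient manifold. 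The paper does this by writing $K$ as a polyhedron $\widetilde K$ with finitely many points (each with connected link) identified. It then uses the fact that identifying finitely many points does not affect embeddability into a three-manifold. Your fallback of ``thickening $B$ to a handlebody'' is too vague to fill this gap. As stated it does not even fit, since whatever replaces $B$ must be glued back to $N$ along the sphere $S$.
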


\begin{proof}
We use the notation from Definition~\ref{resol-def}. Suppose first that~$\mathrm{lk}(v,K)$ is connected.

Let~$P$ be a three-manifold containing~$K$. By adding a collar to~$P$, we may assume that~$v\notin\partial P$.
Let~$B\cong I^3$ be a regular neighborhood of~$v$ in~$P$
such that~$B\cap K=V$. We identify~$G$ with~$\partial B\cap K$. The two-sphere~$\partial B$ can be
decomposed into the union of two ``hemispheres'' (two-discs) $H_1$ and~$H_2$ such that~$H_i\cap G=G_i$ ($i=1,2$) and
$H_1\cap H_2=\partial H_1=\partial H_2$. Take two-discs~$D_1,D_2\subset B$ such that~$D_1\cap D_2=D_i\cap\partial B=
\partial H_i$. These discs cut~$B$ into three parts, say~$B_1$, $B_2$, and~$B_3$. By reindexing the discs~$D_i$ and~$B_i$
we may ensure that~$\partial B_1=H_1\cup D_2$, $\partial B_2=D_1\cup D_2$, and~$\partial B_3=D_1\cup H_2$.
For each~$i=1,2$, let~$h_i:H_i\rightarrow D_i$ be a homeomorphism that is the identity on~$\partial H_i$.

Then the cones~$C_1$, $C_2$, and~$C_3$ over~$G_1\cup h_2(T_2)$, $h_2(T_2)\cup h_1(T_1)$, and~$h_1(T_1)\cup G_2$,
respectively, can be embedded into~$B_1$, $B_2$, and~$B_3$ so that~$C_i\cap\partial B_i=\partial C_i$.
The polyhedron~$(K\setminus V)\cup C_1\cup C_2\cup C_3\subset P$ is then homeomorphic to
the polyhedron obtained from~$K$ by the $T_1,T_2$-extraction at~$v$.

To treat the general case, we observe that if a polyhedron~$K_1$ is obtained from a polyhedron~$K_2$
by identifying finitely many points, then~$K_1$ is embeddable into a three-manifold if and only
if so is~$K_2$.

If the link~$\mathrm{lk}(v,K)$ is disconnected, then there exist a polyhedron~$\widetilde K$ and
finitely many points~$v_1,\ldots,v_m\in\widetilde K$, where~$m>1$, such that~$K$
is obtained from~$\widetilde K$ by identifying all these points with each other, with their image being~$v$. Moreover,
we may ensure that the link~$\mathrm{lk}(v_i,\widetilde K)$ is connected for all~$i=1,\ldots,m$. Then
there exists a special extraction~$\widetilde K\mapsto\widetilde K'$ at one of the points~$v_i$
such that~$K'$ can be obtained from~$\widetilde K'$ by a special extraction. This reduces
the general case to the particular one considered above.
\end{proof}

\begin{lemm}\label{resol-collaps-lem}
In the setting of Definition~\ref{resol-def}, the polyhedron~$K\times I$
collapses to a subpolyhedron homeomorphic to~$\mathfrak s(K,T_1,T_2)\times I$.
\end{lemm}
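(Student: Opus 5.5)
The plan is to work locally near $v$. Write $V=\mathscr N(v)\cong\mathrm{cone}(G)$ with apex $v$, so that
$$K\times I=\bigl(\overline{K\setminus V}\times I\bigr)\cup_{G\times I}(V\times I).$$
Likewise $\mathfrak s(K,T_1,T_2)\times I$ is obtained from $\overline{K\setminus V}\times I$ by gluing $M\times I$ along $\partial M\times I=G\times I$. It therefore suffices to find a subpolyhedron $X\subset V\times I$ with three properties. First, $X\supset G\times I$. Second, $V\times I\searrow X$ by collapses that never touch $G\times I$. Third, there is a homeomorphism $X\to M\times I$ that restricts to the identity $G\times I\to\partial M\times I$ (or to a homeomorphism isotopic to it, which a collar on $G\times I$ absorbs). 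Since the collapses avoid $G\times I$, they extend verbatim to collapses of $K\times I$, and the resulting subpolyhedron is $\bigl(\overline{K\setminus V}\times I\bigr)\cup_{G\times I}X\cong\mathfrak s(K,T_1,T_2)\times I$.

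To build $X$, I would use the $I$-coordinate the way the third dimension of the ball $B$ is used in the preceding embedding lemma: it orders the three cones of $M$. Split $I$ into the thirds $[0;\frac13]$, $[\frac13;\frac23]$ and $[\frac23;1]$. Inside $V\times I$, place a copy $\widehat M$ of $M$ as follows.
\begin{itemize}
\item The cone over $G_1\cup T_2$ has apex $(v,\frac56)$. It is spanned over $G_1\times\{1\}$ together with a copy of $T_2$ lying in $\mathrm{cone}(T_2)\times\{\frac12\}$ near the $G$-end.
\item The cone over $T_1\cup T_2$ has apex $(v,\frac12)$ and lies in $\mathrm{cone}(T_1\cup T_2)\times[\frac13;\frac23]$.
\item The cone over $T_1\cup G_2$ has apex $(v,\frac16)$. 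It is spanned over $G_2\times\{0\}$ and a copy of $T_1$ in $\mathrm{cone}(T_1)\times\{\frac12\}$.
\end{itemize}
Join these to $G\times I$ by the vertical strips $G_1\times[\frac12;1]$, $G_2\times[0;\frac12]$ and $(G_1\cap G_2)\times I$, so that $\partial\widehat M$ is a copy of $G$ sitting inside $G\times I$. Then take $X$ to be a product neighbourhood $\widehat M\times[-\varepsilon;\varepsilon]$ of $\widehat M$, with the product direction chosen so that it sweeps out the whole of $G\times I$. The required homeomorphism $X\cong M\times I$ that is the identity-like map on $G\times I$ comes from this product structure, after reparametrising the collar direction along $G\times I$. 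This bookkeeping is routine but needs care near $G_1\cap G_2$, where the two vertical strips meet.

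For the collapse $V\times I\searrow X$ I would proceed by successive peeling. The 3-cells of $V\times I\setminus X$ are the pieces $\mathrm{cone}(\sigma)\times J$, where $\sigma$ runs over the cells of $G$ and $J$ over the subintervals of $I$ cut out by $\widehat M$. Each such piece is a 3-ball meeting the rest along a disc, because the removed region over a cell $\sigma\subset G_1\setminus T_1$ is a product with an interval that has a free top or bottom face in $V\times\{0\}$ or $V\times\{1\}$. The order of removal is as follows. Start from $V\times\{0\}$ and $V\times\{1\}$, which are free faces of $V\times I$, since $V\times\{0,1\}$ is not glued to anything. Then collapse downward over $G_1$ and upward over $G_2$. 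Finally treat the cells over $T_1$ and $T_2$, where the cones of $\widehat M$ meet. The fact that $T_1$ and $T_2$ are trees should be exactly what makes the remaining 2-dimensional webs collapsible down to the cones. Alternatively, this last step can be organised by Lemma~\ref{I-contract-collapse-lem} applied fibrewise over the collapsible trees $T_i$.

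The hard part is this last step: checking that in the region over $T_1\cup T_2$ every intermediate piece really has a free face, while nothing on $G\times I$ is removed. The tree condition, together with the requirement that $\partial T_i=G_1\cap G_2$ lie in a single component of each $G_i$, is what should guarantee that no sealed-off 3-ball (a pocket without free face) is created. If the direct cellular bookkeeping becomes unwieldy, I would first treat the model case in which $G$ is a single $\Theta$-graph and the trees are arcs. I would then reduce to it by observing that $\mathrm{cone}(G)\times I$ collapses along cones over the edges of $G\setminus(T_1\cup T_2)$ independently, since these edges meet $T_1\cup T_2$ only at their endpoints.
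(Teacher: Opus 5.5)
Your reduction to a relative statement inside $V\times I$ is legitimate: keep $\overline{K\setminus V}\times I$ and $G\times I$ fixed and find $X\cong M\times I$ rel $G\times I$ with $V\times I\searrow X$. Stacking the three cones of $M$ at different heights of the $I$-factor is also the paper's idea; there the cones are horizontal at heights $\tfrac14,\tfrac12,\tfrac34$ and are joined by vertical walls at $\rho=\tfrac12$ and $\rho=1$. However, the proposal stops exactly where the content of the lemma begins. The collapse $V\times I\searrow X$ is only asserted (``should guarantee''), and your fallbacks do not supply it:
\begin{itemize}
\item Lemma~\ref{I-contract-collapse-lem} concerns absolute collapsibility after contracting a trivial $I$-bundle, not a relative collapse onto $X$.
\item The reduction to arcs $T_i$ excludes precisely the trees the paper needs later: $T_1\cong\mathscr S_d$ in Case~(K4), and an H-shaped tree with four leaves in Case~(K6).
\item The regions over edges of $\overline{G\setminus(T_1\cup T_2)}$ cannot be collapsed independently, since those edges meet one another at vertices of $G$.
\end{itemize}

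The concrete missing step is this. Between two consecutive cones lying over $T_1$ (and likewise over $T_2$) there is a slab of the form $\mathrm{cone}(T_i)\times[a;b]$. It is capped above and below by $\widehat M$ and meets neither $V\times0$ nor $V\times1$, so the justification you give for removing pieces (a free top or bottom face in $V\times\{0,1\}$) does not apply to it. Its only possible free faces are the walls $\mathrm{cone}(x)\times[a;b]$ over frontier points $x$ of $T_i$ in $G$, i.e.\ over $J$ and over the points where $\overline{G_i\setminus T_i}$ is attached. These walls open onto regions that are emptied from \emph{opposite} ends of $I$. If you attack the slab from one side only, it generally cannot be removed, because a wall over a vertex stays shared by two surviving slab cells. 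It can be removed only after both the bottom-up and the top-down collapses are finished, and then by following a collapse $T_i\searrow p$. So enclosed pockets do occur; the tree condition is what disposes of them at the end, not what prevents them.

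The paper carries this out in the following steps.
\begin{enumerate}
\item It enlarges $M$ by auxiliary walls over $J_i=\partial\widetilde T_i\setminus\partial T_i$ to a surface $\widetilde{\mathfrak s}$.
\item It checks, in four cases, that $\widetilde{\mathfrak s}$ meets every cell of $K\times I$ in codimension one and bounds two halves $X_\pm$.
\item It collapses $X_-$ from $K\times0$ and $X_+$ from $K\times1$ onto $\widetilde{\mathfrak s}$ together with exactly the slabs over $\widetilde T_1$ and $\widetilde T_2$.
\item It removes these slabs via $\widetilde T_i\searrow p$ and then uses $\widetilde M\searrow M$.
\item Finally, it passes from the collapse onto $\mathfrak s$ to a collapse onto a regular neighbourhood $U\cong\mathfrak s\times I$.
\end{enumerate}

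Your $X$ also needs to be redefined. $\widehat M$ contains strips lying in $G\times I$ itself, where a neighbourhood inside $V\times I$ is one-sided. So a product neighbourhood $\widehat M\times[-\varepsilon;\varepsilon]$ ``sweeping out $G\times I$'' is not well defined, and $(G_1\cap G_2)\times I$ is one-dimensional, not a strip. You must also fix how the cone bases are joined along $J$ (``near the $G$-end''), since that decides which regions are enclosed. Your relative version is a reasonable target, but proving it requires exactly this analysis, with the faces on $G\times I$ kept throughout.
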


\begin{proof}
We continue to use the notation from Definition~\ref{resol-def} and redefine~$M$ as a subpolyhedron of~$V\times I$.
To this end, we introduce a ``cylindrical coordinate system'' $(\rho,\varphi,z)$ on~$V\times I$, in which
a point~$\varphi\in G$ plays the role of the angular coordinate, and the boundary~$\partial V\cong G$
is defined by~$\rho=1$. In other words,
we view~$V\times I$ as~$(I\times G\times I)/{\sim}$, where
\begin{equation}\label{cyl-ident-eq}
(0,\varphi',z)\sim(0,\varphi'',z)
\end{equation}
for all~$\varphi',\varphi''\in G$.
We denote the set~$\partial G_1=\partial G_2=\partial T_1=\partial T_2$ by~$J$.

We define~$M$ to be the union of the following three subpolyhedra~$M_1$, $M_2$, and~$M_3$:
$$\begin{aligned}
M_1&=\left(1\times G_1\times\left[\frac14;\frac12\right]\right)\cup\left(I\times G_1\times\frac14\right)
\cup\left(\left[0;\frac12\right]\times T_2\times\frac14\right)/{\sim},\\
M_2&=\left(\left[\frac12;1\right]\times J\times\left[\frac14;\frac34\right]\right)\cup
\left(\frac12\times T_1\times\left[\frac12;\frac34\right]\right)\\&\hskip26mm\cup
\left(\left[0;\frac12\right]\times(T_1\cup T_2)\times\frac12\right)\cup
\left(\frac12\times T_2\times\left[\frac14;\frac12\right]\right)/{\sim},\\
M_3&=\left(1\times G_2\times\left[\frac12;\frac34\right]\right)\cup\left(I\times G_2\times\frac34\right)
\cup\left(\left[0;\frac12\right]\times T_1\times\frac34\right)/{\sim}.
\end{aligned}$$

One can verify that each~$M_i$ for~$i=1,2,3$ is a cone over~$\partial M_i$, and
$$\partial M_1=G_1'\cup T_2',\quad\partial M_2=T_1'\cup T_2',\quad\partial M_3=T_1'\cup G_2',$$
where
$$\begin{aligned}
G_1'&=\left(1\times G_1\times\frac12\right)\cup
\left(1\times J\times\left[\frac14;\frac12\right]\right)\cong G_1,\\
G_2'&=\left(1\times G_2\times\frac12\right)\cup
\left(1\times J\times\left[\frac12;\frac34\right]\right)\cong G_2,\\
T_1'&=\left(\frac12\times T_1\times\frac34\right)\cup\left(\left[\frac12;1\right]\times J\times\frac34\right)\cong T_1,\\
T_2'&=\left(\frac12\times T_2\times\frac14\right)\cup\left(\left[\frac12;1\right]\times J\times\frac14\right)\cong T_2.
\end{aligned}$$
One can also see that the subpolyhedra~$M_i$ have no intersections with one another except those mentioned above,
as well as no intersections
with~$K\setminus V$. Furthermore, we have
$$\partial M=G_1'\cup G_2'=\partial\left(\overline{K\setminus V}\times\frac12\right),$$
which implies that the subpolyhedron~$\mathfrak s=\bigl((K\setminus V)\times1/2\bigr)\cup M$ is homeomorphic to~$\mathfrak s(K,T_1,T_2)$.

We now proceed to show that the polyhedron~$K\times I$ collapses to a subpolyhedron
homeomorphic to~$\mathfrak s\times I$. We endow~$K$ with a cell
decomposition and view it as a complex. The polyhedron~$G$ is identified with~$\partial V$.
We also assume that the intersection of~$V$ with~$K^{(1)}$ is the cone
over the vertices of~$G$, and that the points in~$J$ lie outside of~$K^{(1)}$.

For each~$i=1,2$, let~$\widetilde T_i$ be a regular neighborhood of~$T_i$ in~$G_i$,
and let~$J_i=\partial\widetilde T_i\setminus\partial T_i=\partial\,\overline{G_i\setminus\widetilde T_i}$.
We define~$\widetilde M$ similarly to~$M$ using~$\widetilde T_i$ instead of~$T_i$, and
let~$\widetilde{\mathfrak s}$ be the following subpolyhedron of~$K\times I$:
$$\widetilde{\mathfrak s}=\left((K\setminus V)\times\frac12\right)\cup\widetilde M\cup\left(\left(\left[0;\frac12\right]\times
J_1\times\left[\frac12;\frac34\right]\right)
\cup\left(\left[0;\frac12\right]\times J_2\times\left[\frac14;\frac12\right]\right)/{\sim}\right).$$
By definition, the subpolyhedron~$\widetilde{\mathfrak s}$ contains~$\mathfrak s$. We now show
that~$\widetilde{\mathfrak s}$ ``cuts~$K\times I$ into two halves.''

First, observe that, for each $k$-cell~$\sigma$ of the product cell decomposition of~$K\times I$ with~$k=1,2,3$,
each connected component of the intersection~$\widetilde{\mathfrak s}\cap\sigma$ is a $(k-1)$-cell,
and~$\widetilde{\mathfrak s}\cap(K\times I)^{(0)}=\varnothing$. Indeed, all the cells of~$K\times I$
meeting~$\widetilde{\mathfrak s}$ are of the form~$\eta\times I$,
where~$\eta$ is a cell of~$K$. It suffices to examine the intersection~$\widetilde{\mathfrak s}\cap(\overline\eta\times I)$
near~$V\times I$ for a two-cell~$\eta$, since far from~$V$ this intersection coincides with~$\overline\eta\times1/2$.

The closure~$P$ of each connected component of the intersection~$\eta\cap V$, where~$\eta$ is a two-cell of~$K$,
is a cone over an edge~$e$ of~$G$. There are four mutually exclusive cases to consider.

\emph{Case 1}: $e\subset G_i\setminus\widetilde T_i$, where~$i\in\{1,2\}$. By symmetry, it suffices to consider
the subcase~$i=1$. The polyhedron~$\widetilde{\mathfrak s}$ meets~$P\times I$ in
the subpolyhedron
\begin{equation}\label{discG1-eq}
\left(I\times e\times\frac14\right)\cup\left(1\times e\times\left[\frac14;\frac12\right]\right)/{\sim},
\end{equation}
which is a two-disc whose interior is contained in~$\eta\times\left(0;1\right)$, and whose boundary is
$$\left(I\times\partial e\times\frac14\right)\cup\left(1\times\partial e\times\left[\frac14;\frac12\right]\right)
\cup\left(1\times e\times\frac12\right)/{\sim}\subset\left(K^{(1)}\times I\right)\cup\left(\partial V\times\frac12\right).$$

\emph{Case 2}: $e\subset\widetilde T_i$. Again, we may assume that~$i=1$.
The intersection $\widetilde{\mathfrak s}\cap(P\times I)$ has two connected components,
one of which is~\eqref{discG1-eq}, while the other is
$$\left(\left[0;\frac12\right]\times e\times\left\{\frac12,\frac34\right\}\right)\cup
\left(\frac12\times e\times\left[\frac12;\frac34\right]\right)/{\sim}.$$
The latter is also a two-disc whose interior is contained in~$\eta\times\left(0;1\right)$, and whose boundary is
$$\left(\left[0;\frac12\right]\times\partial e\times\left\{\frac12,\frac34\right\}\right)\cup
\left(\frac12\times\partial e\times\left[\frac12;\frac34\right]\right)/{\sim}\subset K^{(1)}\times I.$$

\emph{Case 3}: $e\cap J_i\ne\varnothing$. As before, we consider the subcase~$i=1$.
We have~$e\subset G_1$ and~$e\not\subset\widetilde T_1$.
Let~$e'=e\cap\widetilde T_1$, $e\cap J_1=\{p\}$, and~$\partial e\cap\widetilde T_1=\{q\}$.
The intersection $\widetilde{\mathfrak s}\cap(P\times I)$ has two connected components
one of which is~\eqref{discG1-eq}, while the other is
$$\left(\left[0;\frac12\right]\times e'\times\left\{\frac12,\frac34\right\}\right)\cup
\left(\frac12\times e'\times\left[\frac12;\frac34\right]\right)\cup
\left(\left[0;\frac12\right]\times p\times\left[\frac12;\frac34\right]\right)/{\sim}.$$
The latter is also a two-disc whose interior is contained in~$\eta\times\left(0;1\right)$, and whose boundary is
$$\left(\left[0;\frac12\right]\times q\times\left\{\frac12,\frac34\right\}\right)\cup
\left(\frac12\times q\times\left[\frac12;\frac34\right]\right)/{\sim}\subset K^{(1)}\times I.$$

\emph{Case 4}:
$e\cap J\ne\varnothing$. Let~$e_i=e\cap G_i$, $e\cap J=\{p\}$, and~$\partial e\cap G_i=\{q_i\}$ for~$i=1,2$.
Note that we also have~$e_i\subset\widetilde T_i$.
The intersection $\widetilde{\mathfrak s}\cap(P\times I)$ is
\begin{multline*}\left(1\times e_1\times\left[\frac14;\frac12\right]\right)\cup\left(I\times e_1\times\frac14\right)\cup
\left(\left[\frac12;1\right]\times p\times\left[\frac14;\frac34\right]\right)\cup\\
\left(1\times e_2\times\left[\frac12;\frac34\right]\right)\cup\left(I\times e_2\times\frac34\right)\cup
\left(\left[0;\frac12\right]\times e_1\times\left\{\frac12,\frac34\right\}\right)\cup\\
\left(\frac12\times e_1\times\left[\frac12;\frac34\right]\right)\cup
\left(\left[0;\frac12\right]\times e_2\times\left\{\frac14,\frac12\right\}\right)\cup
\left(\frac12\times e_2\times\left[\frac14;\frac12\right]\right)/{\sim},
\end{multline*}
which is a two-disc whose interior is contained in~$\eta\times\left(0;1\right)$, and whose boundary is
\begin{multline*}
\left(1\times e\times\frac12\right)\cup\left(1\times q_1\times\left[\frac14;\frac12\right]\right)\cup
\left(I\times q_1\times\frac14\right)\cup\left(\left[0;\frac12\right]\times q_2\times\frac14\right)\cup\\
\left(\frac12\times q_2\times\left[\frac14;\frac12\right]\right)\cup
\left(\left[0;\frac12\right]\times q_2\times\frac12\right)\cup\left(\left[0;\frac12\right]\times q_1\times\frac12\right)\cup
\left(\frac12\times q_1\times\left[\frac12;\frac34\right]\right)\cup\\
\left(\left[0;\frac12\right]\times q_1\times\frac34\right)\cup
\left(I\times q_2\times\frac34\right)\cup\left(1\times q_2\times\left[\frac12;\frac34\right]\right)
\subset\left(K^{(1)}\times I\right)\cup\left(\partial V\times\frac12\right)/{\sim}.
\end{multline*}

Thus, in each of these cases,
the intersection~$\widetilde{\mathfrak s}\cap(\eta\times I)$ is a two-disc,
and the intersection~$\widetilde{\mathfrak s}\cap(\dot\eta\times I)$ consists
of open arcs contained in two-cells of~$K\times I$ and finitely many points contained in one-cells.

Thus, $\widetilde{\mathfrak s}$ is of codimension one in every cell of~$K\times I$.
Let~$X$ be the cell complex obtained from~$K\times I$ by a subdivision with the mimimum
possible number of cells such that~$\widetilde{\mathfrak s}$ is a subcomplex. We define
two subsets~$X_-$ and~$X_+$ of~$X$ as follows:
$$\begin{aligned}
X_-&=\left(\overline{K\setminus V}\times\left[0;\frac12\right]\right)\cup\left(\left(\left[\frac12;1\right]\times G_2\times\left[0;\frac34\right]\right)
\cup\left(\left[0;\frac12\right]\times(\widetilde T_1\cup G_2)\times\left[\frac12;\frac34\right]\right)\right.\\
&\hskip-1em\cup\left.\left(I\times G_1\times\left[0;\frac14\right]\right)\cup\left(\left[0;\frac12\right]\times G_2\times\left[0;\frac14\right]\right)
\cup\left(\left[0;\frac12\right]\times(\overline{G_2\setminus\widetilde T_2})\times\left[\frac14;\frac12\right]\right)\right)/{\sim},\\
X_+&=\left(\overline{K\setminus V}\times\left[\frac12;1\right]\right)\cup\left(\left(\left[\frac12;1\right]\times G_1\times\left[\frac14;1\right]\right)
\cup\left(\left[0;\frac12\right]\times(\widetilde T_2\cup G_1)\times\left[\frac14;\frac12\right]\right)\right.\\
&\hskip-1em\cup\left.\left(I\times G_2\times\left[\frac34;1\right]\right)\cup\left(\left[0;\frac12\right]\times G_1\times\left[\frac34;1\right]\right)
\cup\left(\left[0;\frac12\right]\times(\overline{G_1\setminus\widetilde T_1})\times\left[\frac12;\frac34\right]\right)\right)/{\sim}.
\end{aligned}$$

The diagram in Figure~\ref{blocks-fig} shows which product blocks constituting~$X_-$ have two-dimensional
intersections with other blocks, and which parts of their frontiers are ``free'', that is, contained in~$\partial X_-$. (The figure does not show all
triple intersections of the blocks, which are one-dimensional. The part~$K\times 0$ of~$\partial X_-$ is also not shown.)
\begin{figure}[ht]
\includegraphics[width=480pt]{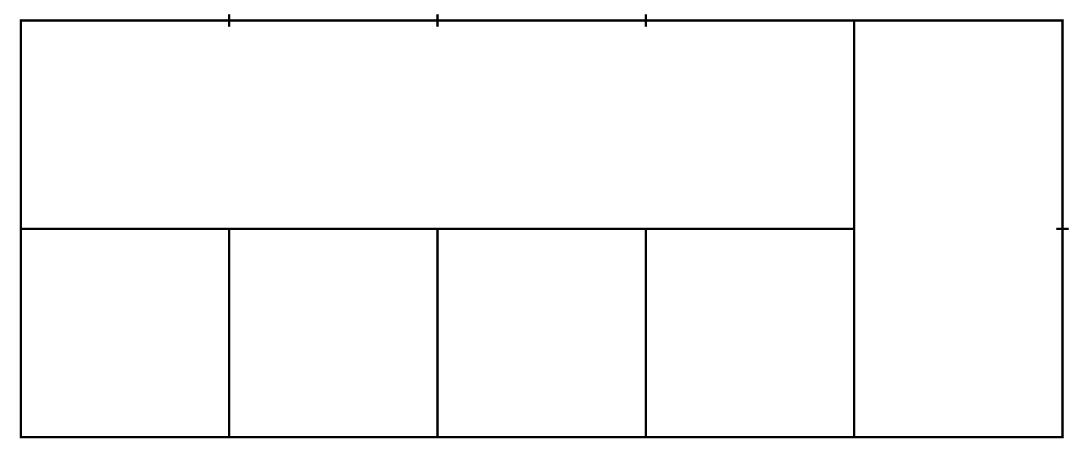}
\put(-445,55){$\scriptstyle\overline{K\setminus V}\times\left[0;\frac12\right]$}
\put(-358,55){$\scriptstyle I\times G_1\times\left[0;\frac14\right]$}
\put(-272,55){$\scriptstyle\left[0;\frac12\right]\times G_2\times\left[0;\frac14\right]$}
\put(-190,55){$\scriptstyle\left[0;\frac12\right]\times(G_2\setminus\widetilde T_2)\times\left[\frac14;\frac12\right]$}
\put(-320,147){$\scriptstyle\left[\frac12;1\right]\times G_2\times\left[0;\frac34\right]$}
\put(-98,100){$\scriptstyle\left[0;\frac12\right]\times(\widetilde T_1\cup G_2)\times\left[\frac12;\frac34\right]$}
\put(-446,108){$\scriptscriptstyle1\times G_2\times\left[0;\frac12\right]$}
\put(-358,108){$\scriptscriptstyle\left[\frac12;1\right]\times J\times\left[0;\frac14\right]$}
\put(-262,108){$\scriptscriptstyle\frac12\times G_2\times\left[0;\frac14\right]$}
\put(-181,108){$\scriptscriptstyle\frac12\times(G_2\setminus\widetilde T_2)\times\left[\frac14;\frac12\right]$}
\put(-482,40){\rotatebox{90}{$\scriptscriptstyle\overline{K\setminus V}\times\frac12$}}
\put(-393,35){\rotatebox{90}{$\scriptscriptstyle1\times G_1\times\left[0;\frac14\right]$}}
\put(-301,30){\rotatebox{90}{$\scriptscriptstyle\left[0;\frac12\right]\times J\times\left[0;\frac14\right]$}}
\put(-208,25){\rotatebox{90}{$\scriptscriptstyle\left[0;\frac12\right]\times(G_2\setminus\widetilde T_2)\times\frac14$}}
\put(-99,88){\rotatebox{-90}{$\scriptscriptstyle\left[0;\frac12\right]\times(G_2\setminus\widetilde T_2)\times\frac12$}}
\put(-99,173){\rotatebox{-90}{$\scriptscriptstyle\frac12\times G_2\times\left[\frac12;\frac34\right]$}}
\put(-447,-1){$\scriptscriptstyle1\times G_1\times\left[\frac14;\frac12\right]$}
\put(-348,-1){$\scriptscriptstyle I\times G_1\times\frac14$}
\put(-260,-1){$\scriptscriptstyle\left[0;\frac12\right]\times\widetilde T_2\times\frac14$}
\put(-176,-1){$\scriptscriptstyle\left[0;\frac12\right]\times J_2\times\left[\frac14;\frac12\right]$}
\put(-448,200){$\scriptscriptstyle\frac12\times\widetilde T_2\times\left[\frac14;\frac12\right]$}
\put(-355,200){$\scriptscriptstyle1\times G_2\times\left[\frac12;\frac34\right]$}
\put(-267,200){$\scriptscriptstyle\left[\frac12;1\right]\times J\times\left[\frac14;\frac34\right]$}
\put(-170,200){$\scriptscriptstyle\left[\frac12;1\right]\times G_2\times\frac34$}
\put(-80,200){$\scriptscriptstyle\frac12\times\widetilde T_1\times\left[\frac12;\frac34\right]$}
\put(-7,178){\rotatebox{-90}{$\scriptscriptstyle\left[0;\frac12\right]\times J_1\times\left[\frac12;\frac34\right]$}}
\put(-7,88){\rotatebox{-90}{$\scriptscriptstyle\left[0;\frac12\right]\times(\widetilde T_1\cup\widetilde T_2)\times\frac12$}}
\put(-87,-1){$\scriptscriptstyle\left[0;\frac12\right]\times(\widetilde T_1\cup G_2)\times\frac34$}
\caption{The structure of~$X_-$ (not to scale). Quotienting by the equivalence~$\sim$ is omitted from the notation}\label{blocks-fig}
\end{figure}
Using this diagram, one can verify that~$\dot X_-=\widetilde{\mathfrak s}$.
By symmetry, we also have~$\dot X_+=\widetilde{\mathfrak s}$.
This implies that~$\widetilde{\mathfrak s}$ is not only of codimension one in every cell, but also two-sided;
that is, its regular neighborhood~$\mathscr N(\widetilde{\mathfrak s})$ is homeomorphic to~$\widetilde{\mathfrak s}\times I$.
We can choose this neighborhood in such a way that any connected component of
the intersection of~$\mathscr N(\widetilde{\mathfrak s})$ with every $k$-cell~$\sigma$ of~$K\times I$ ($k=1,2,3$)
is homeomorphic to~$(\widetilde{\mathfrak s}\cap\sigma)\times I$, which will be assumed in the sequel.
We denote by~$U$ the part of~$\mathscr N(\widetilde{\mathfrak s})$ whose intersection with every cell~$\sigma$ of~$K\times I$
consists only of those components of~$\mathscr N(\widetilde{\mathfrak s})\cap\sigma$ that contain a cell from~$\mathfrak s\cap\sigma$.
Clearly, we have~$U\cong\mathfrak s\times I$.

Since~$U$ is a ``thickened~$\mathfrak s$'', proving that~$K\times I$ collapses to~$U$ after a subdivision
amounts to showing that~$X$ collapses to~$\mathfrak s$, which we will demonstrate now.

One can see from Figure~\ref{blocks-fig} that~$X_-$ collapses to
$$\widetilde{\mathfrak s}\cup\left(\left[0;\frac12\right]\times\widetilde T_1\times\left[\frac12;\frac34\right]/{\sim}\right).$$
Indeed, this subcomplex is obtained from~$X_-$ by collapsing all cells in~$K\times0$ across the attached
cells (in descending order of dimension).

Similarly, $X_+$ collapses to
$$\widetilde{\mathfrak s}\cup\left(\left[0;\frac12\right]\times\widetilde T_2\times\left[\frac14;\frac12\right]/{\sim}\right),$$
which implies that~$X$ collapses to
$$\widetilde{\mathfrak s}\cup\left(\left(\left[0;\frac12\right]\times\widetilde T_1\times\left[\frac12;\frac34\right]\right)
\cup\left(\left[0;\frac12\right]\times\widetilde T_2\times\left[\frac14;\frac12\right]\right)/{\sim}\right).$$
The latter is identical to
\begin{equation}\label{kv12m..-eq}
\left((K\setminus V)\times\frac12\right)\cup\widetilde M\cup
\left(\left(\left[0;\frac12\right]\times\widetilde T_1\times\left[\frac12;\frac34\right]\right)
\cup\left(\left[0;\frac12\right]\times\widetilde T_2\times\left[\frac14;\frac12\right]\right)/{\sim}\right).
\end{equation}

Let~$p$ be a vertex of~$\widetilde T_1$ (for instance, a point in~$J$). Since~$\widetilde T_1$
is a tree, we have~$\widetilde T_1\searrow p$, which implies that
$$\left(\left[0;\frac12\right]\times\widetilde T_1\times\left[\frac12;\frac34\right]\right)/{\sim}$$
collapses to
$$\left(\left[0;\frac12\right]\times\widetilde T_1\times\left\{\frac12,\frac34\right\}\right)\cup
\left(\frac12\times\widetilde T_1\times\left[\frac12;\frac34\right]\right)\cup
\left(\left[0;\frac12\right]\times p\times\left[\frac12;\frac34\right]\right)/{\sim},$$
which, in turn, collapses to
$$\left(\left[0;\frac12\right]\times\widetilde T_1\times\left\{\frac12,\frac34\right\}\right)\cup
\left(\frac12\times\widetilde T_1\times\left[\frac12;\frac34\right]\right)/{\sim}=
\widetilde M\cap\left(\left(\left[0;\frac12\right]\times\widetilde T_1\times\left[\frac12;\frac34\right]\right)/{\sim}\right).$$

Similarly, 
$$\left(\left[0;\frac12\right]\times\widetilde T_2\times\left[\frac14;\frac12\right]\right)/{\sim}$$
collapses to
$$\widetilde M\cap\left(\left(\left[0;\frac12\right]\times\widetilde T_2\times\left[\frac14;\frac12\right]\right)/{\sim}\right).$$
This implies that the complex~\eqref{kv12m..-eq} collapses to
$$\left((K\setminus V)\times\frac12\right)\cup\widetilde M,$$
and so does~$X$. Since~$\widetilde T_i\searrow T_i$ for~$i=1,2$, it follows that~$\widetilde M\searrow M$,
which yields
$$X\searrow\left((K\setminus V)\times\frac12\right)\cup M=\mathfrak s,$$
and this completes the proof.
\end{proof}

\subsection{Matveev moves}

In~\cite{mat87,mat87+}, Matveev deduces ZCC restricted to standard polyhedra not embeddable into a three-manifold from
ACCS by means of several transformations of standard polyhedra, which are referred to here as \emph{Matveev moves}.
These moves are precisely the ones denoted in~\cite{mat87} by~$\slbf T_1$, $\slbf T_1^{-1}$, $\slbf T_2$, $\slbf T_2^{-1}$,
and~$\slbf T_3^{-1}$ (the move~$\slbf T_3$ is omitted). The reader is referred to~\cite{mat87} for the definitions of the moves. They are also described
in~\cite{mat07},
where $\slbf T_1$-, $\slbf T_2$-, and $\slbf T_3$-moves are called $\slbf V$-, $\slbf T$-, and $\slbf U$-moves,
respectively (see \cite[Definitions~1.2.6, 1.2.3, and~1.3.7]{mat07}). The $\slbf T_2$-move
is also known as the dual of the 2--3 Pachner move. Alternative definitions of the moves are given
in the proof of Lemma~\ref{matmov-lem} below.

To establish his result, Matveev proves two main points: first, if two standard polyhedra~$K$ and~$K'$
are three-deformable to each other and~$K$ is not embeddable into a three-manifold,
then~$K'$ can be obtained from~$K$ by a sequence of these moves; and second, these moves
are, in our terminology, $1$-admissible ($\slbf T_1$- and~$\slbf T_2$-moves are even admissible).

\begin{rema}
Matveev's terminology and exposition are quite different from ours. In particular, he discusses
the transformation of~$K'$ into~$K$, and accordingly uses $\slbf T_3$-moves rather than~$\slbf T_3^{-1}$.
Furthermore, he proves the $1$-admissibility of his moves in a somewhat stronger sense.
\end{rema}

We will apply some of Matveev's techniques in the final part of the proof of Theorem~\ref{K->semi-standard-th}.
For this reason, we briefly indicate how Matveev moves are related to the transformations considered above.

\begin{lemm}\label{matmov-lem}
Any Matveev move can be decomposed into a sequence of contractions and special extractions.
\end{lemm}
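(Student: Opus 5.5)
We treat the moves one at a time. The idea is that each Matveev move replaces a small region of a standard polyhedron, namely a regular neighbourhood of a disc, by another standard piece with the same boundary. Both pieces become the same ``singular'' polyhedron once a suitable collapsible subpolyhedron is contracted to a point. So each move is a contraction followed by an extraction, and the task is to check that the extraction needed is in fact special, i.e.\ of the form $\mathfrak s(\cdot,T_1,T_2)$ from Definition~\ref{resol-def}. Inverse moves then need no separate treatment, except that we must express them again as contractions followed by special extractions rather than formal inverses. This is why $\slbf T_3$ itself, whose inverse direction would be a non-special extraction, is excluded.

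\emph{The $\slbf T_2$-move} (dual $2$--$3$ move). In the polyhedron with two true vertices joined by a true edge $e$, contract the closed edge $\overline e$; this is a disc contraction with $L\cong I$. The result has a single vertex $v$ whose link $G$ is the graph obtained by gluing two copies of $\mathscr K_4$ minus a vertex along their three leaves.

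In the polyhedron after the move, the three vertices span a triangular two-cell $\Delta$. Contracting $\overline\Delta\cong I^2$ gives the same local picture. We must also check that the link at the image point is the same graph $G$; this is a direct local computation.

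It then remains to exhibit $G=G_1\cup G_2$ and trees $T_1\subset G_1$, $T_2\subset G_2$ such that $\mathfrak s(K,T_1,T_2)$ recovers each side.
\begin{itemize}
\item For the two-vertex side, take $G_1$ and $G_2$ to be the two halves of $G$ and $T_1=T_2=$ tripods, so that the middle cone over $T_1\cup T_2$ is a cone over a $\Theta_3$, which gives back the edge $e$.
\item For the three-vertex side, choose $G_1$ and $G_2$ differently, with $\partial G_i$ of three points and trees whose union is a cycle, so that the middle cone is the disc $\Delta$.
\end{itemize}
Thus $\slbf T_2$ and $\slbf T_2^{-1}$ are both (contraction) $\circ$ (special extraction).

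\emph{The $\slbf T_1$-move} (the ``lune'' move) is treated the same way. Contract the arc (or the lune disc, on the other side) to a point on a true edge. The resulting link is obtained from $\Theta_3$ by doubling a point, and in the two directions $J=\partial G_1$ has $2$ and $4$ points respectively. One again writes down explicit $G_i$ and $T_i$.

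\emph{The $\slbf T_3^{-1}$-move} is where we expect the main obstacle. The local piece involved (Matveev's $U$-move region) is not a ball neighbourhood of a disc with the same topological type on both sides. Two things must therefore be checked: that a single contraction of a collapsible subpolyhedron, probably a disc together with an arc, brings both sides to a common polyhedron, and that the extraction in the $\slbf T_3^{-1}$ direction is special. We would first try to decompose $\slbf T_3^{-1}$ as a $\slbf T_2$-type contraction/extraction followed by a collapse-like contraction of a disc whose boundary runs over a degree-three edge.

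The delicate point is verifying the condition that $G_1\cap G_2$ lies in a single connected component of each $G_i$, and that the trees $T_i$ exist with $\partial T_i=J$. If one step fails this condition, we would compose with an additional intermediate contraction to a vertex with connected link, using the reduction to connected links from the embedding lemma.
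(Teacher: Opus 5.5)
There is a genuine gap in the $\slbf T_2$ direction (edge $\to$ triangle). Your treatment of $\slbf T_2^{-1}$ is correct and is exactly the paper's: contract $\overline\Delta$, then extract with $J$ the midpoints of the three vertical edges of the prism graph and with $T_1,T_2$ tripods. The triangle side, however, cannot be obtained by a single special extraction at the vertex produced by contracting $e$.

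Your data are already inconsistent. If $|J|=3$, then $T_1\cup T_2$ is two tripods glued along their leaves, i.e.\ $\Theta_3$, not a cycle. If $T_1\cup T_2$ is a cycle, then $|J|=2$, the apex $a_2$ is not a true vertex, and at most two new true vertices appear.

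The deeper problem is structural. In $\mathfrak s(K,T_1,T_2)$ the only true vertices lying in $M$ are among the apexes $a_1,a_2,a_3$. The cones over $G_1\cup T_2$ and $T_1\cup G_2$ meet only in the finite set $J$. Hence every true edge inside $M$ either joins $a_2$ to $a_1$ or $a_3$ (through branch points of $T_2$ or $T_1$), or joins $a_1$ or $a_3$ to $\partial M$. There is never an edge $a_1a_3$ inside $M$. So the three pairwise adjacent true vertices of a triangle lying in the interior of $N'$ cannot be created in one step.

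This is why the paper needs four operations for $\slbf T_2$:
\begin{enumerate}
\item contract $e$;
\item perform a $T_1,T_2$-extraction with $T_1$ a regular neighbourhood of a vertical edge, which creates an untwisted standard bigon and an untwisted non-standard one (the far apex still has prism link);
\item contract the non-standard bigon, which again yields a vertex with prism link;
\item extract there with $\partial T_i'$ the midpoints of the vertical edges.
\end{enumerate}
Your blanket scheme ``one contraction, then one special extraction'' is too rigid.

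The other moves are not actually handled.

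For $\slbf T_3^{-1}$ you only say what you would try. No extraction is needed at all: $\slbf T_3^{-1}$ is simply the contraction of a twisted standard bigon, which is a disc contraction.

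Your $\slbf T_1$ is the wrong move. $\slbf T_1$ is Matveev's $\slbf V$-move, not the lune move; the paper identifies $\slbf T_1,\slbf T_2,\slbf T_3$ with the $\slbf V$-, $\slbf T$-, $\slbf U$-moves. It is itself a single $T_1,T_2$-extraction at a standard vertex, with $T_1,T_2$ regular neighbourhoods of two opposite edges of $\mathscr K_4$. In that case all three apexes have link $\mathscr K_4$ and are joined in a chain by two bigons. Correspondingly, $\slbf T_1^{-1}$ is the contraction of the union of these two untwisted standard bigons. Your $\slbf T_1$ paragraph describes a different local picture and never exhibits the $G_i$ and $T_i$.
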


\begin{proof}
We call a true vertex of a polyhedron \emph{standard} if its link is homeomorphic to~$\mathscr K_4$.
By a \emph{bigon} (\emph{triangle}) in a polyhedron~$K$ we mean a two-disc~$d\subset K$ such
that its frontier~$\dot d$ coincides with~$d\cap K^{(1)\mathrm t}$ and contains exactly two (respectively, three) true vertices
of~$K$. A bigon (or triangle)~$d$ is called \emph{standard} if all the true vertices in~$\dot d$ are standard.
It is called \emph{twisted} if its regular neighborhood is not embeddable into~$\mathbb R^3$,
and \emph{untwisted} otherwise.

One can verify that Matveev $\slbf T_1$-moves are precisely $T_1,T_2$-extractions at a standard vertex,
where~$T_1$ and~$T_2$ are regular neighborhoods of two opposite edges of~$\mathscr K_4$.
Accordingly, each Matveev $\slbf T_1^{-1}$-move is a contraction, namely the contraction of the union of
two untwisted standard bigons.

The Matveev $\slbf T_2$-move can be described as follows: $K\mapsto K'$ is a $\slbf T_2$-move if
there exist a true edge~$e$ of~$K$ connecting two distinct standard vertices, and an untwisted standard
triangle~$t$ of~$K'$ such that the pairs~$(\overline{K\setminus N},\partial N)$
and~$(\overline{K'\setminus N'},\partial N')$ are homeomorphic, where~$N$ is a regular neighborhood of~$e$ in~$K$
and~$N'$ is a regular neighborhood of~$t$ in~$K'$.

In this case, $\partial N=\partial N'$ is the $1$-skeleton of a triangular prism. We call the edges contained
in the bases of the prism \emph{horizontal}, and the other three edges \emph{vertical}.

The transition from~$K'$ to~$K$ (which is a $\slbf T_2^{-1}$-move) can be decomposed into the following
two operations: the contraction of~$t$ followed by a $T_1,T_2$-extraction at the new vertex, where~$T_1$, $T_2$ are chosen
so that~$\partial T_i$
is the set of midpoints of the vertical edges of~$\partial N=\partial N'$.

The inverse transition (which is a $\slbf T_2$-move) can be decomposed into four operations as follows.
First, contract~$e$, then perform a $T_1,T_2$-extraction at the resulting vertex, where~$T_1$
is a regular neighborhood of a vertical edge. This creates two untwisted bigons, one of which is standard, while
the other is non-standard. Contract the non-standard bigon. This creates a true vertex whose link
is again the $1$-skeleton of a triangular prism. Perform a $T_1',T_2'$-extraction at this vertex, where~$T_1'$, $T_2'$ are
such that~$\partial T_i'$ is the set of midpoints of the vertical edges. The obtained
polyhedron is isomorphic to~$K'$.

Finally, the Matveev $\slbf T_3^{-1}$-move can be defined as the contraction of a twisted standard bigon. One can
verify that this definition is equivalent to Matveev's original one.
\end{proof}

\section{Standardization. Proof of Theorem~\ref{K->semi-standard-th}}\label{standardtn-sec}

For any two polyhedra~$K_1$ and~$K_2$, we have~$(K_1\#K_2)\times I\searrow(K_1\times[0;1/2])\cup(K_2\times[1/2;1])=
(K_1\times I)\#(K_2\times I)$, which implies the following statement.

\begin{lemm}\label{r-collapse-lem}
If polyhedra~$K_1,K_2,\ldots,K_m$ are $r$-collapsible, then any polyhedron of the form $K_1\# K_2\#\ldots\#K_m$
is $r$-collapsible.
\end{lemm}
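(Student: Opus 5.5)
The plan is to reduce to the case $m=2$ by induction on $m$, and then to prove the case $m=2$ by an explicit collapse. The paper has just shown that
$$(K_1\#K_2)\times I\searrow (K_1\times[0;1/2])\cup(K_2\times[1/2;1])=(K_1\times I)\#(K_2\times I).$$
Here the wedge point is $(p_1,1/2)\sim(p_2,1/2)$. To see this collapse, regard $K_1\#K_2\times I$ as $K_1\times I$ and $K_2\times I$ glued along the segment $p\times I$. Collapse $K_1\times[1/2;1]$ onto $(K_1\times 1/2)\cup(p\times[1/2;1])$; this is possible because $K_1\times[1/2;1]\searrow K_1\times 1/2\cup p\times[1/2;1]$ holds for any compact polyhedron, using the product structure and the free face $K_1\times 1$. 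Collapse $K_2\times[0;1/2]$ symmetrically. The leftover segments $p\times[1/2;1]$ and $p\times[0;1/2]$ now lie inside $K_2\times[1/2;1]$ and $K_1\times[0;1/2]$ respectively, so what remains is exactly the wedge of the two products.

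For general $r$ I would use the same construction with $I$ replaced by $I^r$. Write $I^r=I^{r-1}\times I$ and apply the above collapse to the last coordinate with $K_j\times I^{r-1}$ in place of $K_j$. Equivalently, one can treat $(K_1\#K_2)\times I^r$ as $K_1\times I^r$ and $K_2\times I^r$ glued along $p\times I^r$. Collapse $K_1\times I^{r-1}\times[1/2;1]$ away except for $p\times I^{r-1}\times[1/2;1]$ and the face at height $1/2$, and collapse the $K_2$ side symmetrically. This leaves $K_1\times I^{r-1}\times[0;1/2]$ and $K_2\times I^{r-1}\times[1/2;1]$ meeting in $p\times I^{r-1}\times 1/2$, which is an $(r-1)$-cube and not a point. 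This is the one point needing care, because the result is a union of two pieces glued along a collapsible set rather than a wedge.

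To finish from there, each piece is homeomorphic to $K_j\times I^r$ and hence collapsible by hypothesis. Their intersection $D=p\times I^{r-1}\times 1/2$ is a collapsible subpolyhedron. I would collapse $K_2\times I^{r-1}\times[1/2;1]$ onto $D$. It suffices to know that a collapsible polyhedron $Q=K_2\times I^r$ collapses onto the collapsible face $D\subset\partial$-region, and this is the main obstacle. In general, $X$ collapsible with $D\subset X$ collapsible does not imply $X\searrow D$ in low dimensions, but here there is extra structure. Rather than rely on that, I would make the wedge point genuinely a point, as in the paper's formula. Collapse $p\times I^{r-1}\times[1/2;1]$ further inside the $K_1$-half using the product with the free face at $I^{r-1}$'s boundary, which is possible since $p\times I^{r-1}\times[1/2;1]$ together with the cube structure of $K_1\times I^{r-1}\times[1/2;1]$ collapses to $K_1\times I^{r-1}\times 1/2\cup (p\times q\times[1/2;1])$ for a single point $q$. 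The result is then a wedge $(K_1\times I^r)\vee(K_2\times I^r)$ of two collapsible polyhedra at one point.

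A wedge of two collapsible complexes at a point is collapsible: collapse the first to the wedge point, which is permitted since collapses of one summand whose cells avoid the other stay elementary in the wedge, and then collapse the second. Induction on $m$ then finishes the lemma, since $K_1\#\cdots\#K_m=(K_1\#\cdots\#K_{m-1})\#K_m$ and the inductive hypothesis gives that the first summand is $r$-collapsible.
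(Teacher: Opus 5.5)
\paragraph{Gap in the passage to $r\geqslant 2$.} Your $r=1$ argument and the induction on $m$ are fine; they are exactly the paper's collapse $(K_1\#K_2)\times I\searrow(K_1\times[0;1/2])\cup(K_2\times[1/2;1])$. The problem is the step where you collapse $p\times I^{r-1}\times[1/2;1]$ ``further inside the $K_1$-half'' down to $p\times q\times[1/2;1]$. That step is not available. The cells of $p\times I^{r-1}\times[1/2;1]$ are faces of cells of $K_2\times I^{r-1}\times[1/2;1]$, which you intend to keep, so they are never free in the ambient complex. The collapse $K_1\times I^{r-1}\times[1/2;1]\searrow K_1\times I^{r-1}\times\{1/2\}\cup p\times q\times[1/2;1]$ is valid for that block in isolation, but it does not lift to $(K_1\#K_2)\times I^r$. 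Even granting it, the remaining pieces $K_1\times I^{r-1}\times[0;1/2]$ and $K_2\times I^{r-1}\times[1/2;1]$ both still contain $p\times I^{r-1}\times\{1/2\}$. Their intersection is therefore still an $(r-1)$-cube, and no one-point wedge has been produced. You are left with the obstacle you yourself flagged, namely whether $K_2\times I^{r-1}\times[1/2;1]\searrow p\times I^{r-1}\times\{1/2\}$, and this remains unproved.

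\paragraph{The missing observation.} Your intermediate space is again of the original form. After reordering the factors,
$$K_1\times I^{r-1}\times[0;1/2]\cup K_2\times I^{r-1}\times[1/2;1]\cong\bigl((K_1\times[0;1/2])\#(K_2\times[1/2;1])\bigr)\times I^{r-1},$$
which is a connected sum at the point $(p,1/2)$, multiplied by $I^{r-1}$. Moreover, $(K_j\times[0;1/2])\times I^{r-1}\cong K_j\times I^r$ is collapsible. So you should induct on $r$. Equivalently, iterate the $r=1$ collapse one factor at a time, using that $X\searrow Y$ implies $X\times I\searrow Y\times I$, and replace $K_j$ by $K_j\times I$ at each stage. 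After $r$ steps you reach $(K_1\times[0;1/2]^r)\cup(K_2\times[1/2;1]^r)$. Its two pieces meet only in the single point $p\times(1/2,\ldots,1/2)$, so it is a genuine wedge of copies of $K_1\times I^r$ and $K_2\times I^r$, and your wedge argument then finishes the proof. This iteration is what the paper's one-line formula is meant to convey.
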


Thus, to prove Theorem~\ref{K->semi-standard-th}, it remains to show that any compact contractible
two-polyhedron can be transformed into a semi-standard one by a sequence of admissible transformations.
As such transformations, we will use elementary collapses, disc contractions, and special extractions.
We have shown above that all of these transformations are admissible.

One can also readily see the following.

\begin{lemm}\label{K1K1'-lem}
Let~$K=K_1\#K_2$ be a polyhedron of dimension at most two. Suppose that~$K_1\mapsto K_1'$ is an
elementary collapse, a disc contraction, or a special extraction. Then there exists a polyhedron~$K'=K_1'\#K_2$
such that~$K\mapsto K'$ is an elementary collapse, a disc contraction, or a special extraction.
\emph(Note, however, that if~$K_1\mapsto K_1'$ is an elementary collapse, then~$K\mapsto K'$ may sometimes
be a disc contraction.\emph)
\end{lemm}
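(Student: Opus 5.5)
The plan is to treat the three kinds of moves separately. In each case I would observe that the move $K_1\mapsto K_1'$ is supported in a compact region of $K_1$. I would then arrange that this region either avoids the wedge point $p_1$, or that the move is compatible with the extra piece $K_2$ hanging at $p_1$. Write $K=(K_1\sqcup K_2)/(p_1\sim p_2)$ and let $p$ be the image of $p_1$. Since connected sums are defined only up to the choice of points and PL homeomorphism, I am free to choose the wedge point $p_1'\in K_1'$ conveniently. The key preliminary step is that $p_1$ may be moved within its open star, or along a path in $K_1$ avoiding the support of the move, without changing the homeomorphism type relevant for our purposes. This is not true in general, so I will avoid it and instead handle $p_1$ directly.

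\emph{Disc contractions.} Suppose $K_1'$ is obtained by contracting $L\cong I^k\subset K_1$ to a point. If $p_1\notin L$, I shrink $\mathscr N(L)$ so that it misses $p_1$. The same formula~\eqref{Kcontracted-eq} applied in $K$ to $L\subset K_1\subset K$ then gives $K'=K_1'\#K_2$. If $p_1\in L$, then $\mathscr N(L)$ in $K$ is $\mathscr N(L)$ in $K_1$ wedged with a cone neighbourhood of $p_2$ in $K_2$. Contracting $L$ inside $K$ yields $K_1'$ with $K_2$ attached at the image point of $L$, which is again $K_1'\#K_2$. This uses the fact that contracting a collapsible set and wedging commute, which is visible from the quotient description $K/{\sim}_f$.

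\emph{Special extractions.} Suppose the move is a $T_1,T_2$-extraction at $v\in K_1$. If $v\neq p_1$, I choose $V$ small enough to miss $p_1$, and the extraction performed in $K$ is $K_1'\#K_2$. If $v=p_1$, the link of $p$ in $K$ is $\mathrm{lk}(v,K_1)\sqcup\mathrm{lk}(p_2,K_2)$. Here I would use the disconnected-link version of Definition~\ref{resol-def}, as in the reduction in the embeddability lemma: choose $G_1$ to be $G_1^{K_1}$ together with the components coming from $K_2$, keeping $T_1,T_2$ unchanged. The condition that $G_1\cap G_2$ lies in one component of each $G_i$ still holds. The cone over $G_1\cup T_2$ then carries the $K_2$-part attached at its cone point, so the result is $K_1'\#K_2$ with the wedge point at that cone point.

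\emph{Elementary collapses.} Suppose $K_1\stackrel{\sigma,\eta}\searrow K_1'$ in a cell structure on $K_1$. We may take $p_1$ to be a vertex. If $p_1\notin\overline\sigma$, or $p_1\in\dot\sigma\setminus\eta$, the same collapse works in $K$, because $\eta$ remains a free face: attaching $K_2$ at a point outside $\eta$ does not affect freeness. The troublesome case, which I expect to be the main obstacle and which explains the remark in the statement, is $p_1\in\eta$. In that case $\eta$ is no longer free in $K$, since $p$ has extra link. Here I would instead perform a disc contraction. The set $L=\overline{\sigma}$ minus a small collar of $\dot\sigma\setminus\eta$ is a disc containing $\eta$ and $p_1$. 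Contracting a suitable disc $D\subset\overline\sigma$ that meets $K_1'$ exactly in $\overline{\dot\sigma\setminus\eta}$-compatible fashion (equivalently, collapsing $\sigma$ onto $\dot\sigma\setminus\eta$ viewed as a contraction of $\overline\sigma$ along the collapse fibres) produces $K_1'$ with $K_2$ reattached at some point of $\dot\sigma\setminus\eta$.

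Checking that this is honestly a disc contraction in the sense of Definition~\ref{contract-defi} is the step I expect to be delicate. If necessary, I would first subdivide, or perform an elementary collapse of the part of $\eta$ away from $p_1$, so that only a small disc around the path from $p_1$ to $\dot\sigma\setminus\eta$ needs contracting.
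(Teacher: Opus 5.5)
Your cases for disc contractions and special extractions are fine. Putting the components of $\mathrm{lk}(p_2,K_2)$ into $G_1$ is exactly how the paper handles disconnected links.

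The gap is in the elementary collapse $K_1\stackrel{\sigma,\eta}\searrow K_1'$.

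\emph{The vertex assumption.} You cannot ``take $p_1$ to be a vertex'': the cell structure is the one in which $(\sigma,\eta)$ is a collapse, and subdividing at $p_1$ destroys it. As a result, the case $p_1\in\sigma$ is silently dropped. It is a genuine case, because $p_1$ does not survive in $K_1'$, so $K_2$ must be re-attached elsewhere.

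\emph{The case $p_1\in\eta$.} Your contraction does not produce $K_1'\#K_2$.
\begin{itemize}
\item If $\dim\sigma=1$, then $\overline\sigma$ is an arc from the free vertex $\eta$ to a vertex $w$. An arc that omits a collar of $w$ leaves a whisker between $K_1'$ and $K_2$. You must contract the whole arc $\overline\sigma$, which gives $K_1'\vee_wK_2$. This also covers $p_1\in\sigma$, and it is the disc contraction the remark refers to.
\item If $\dim\sigma=2$, let $\beta=\overline{\dot\sigma\setminus\eta}$ and let $a,b$ be the ends of $\eta$. If $L$ misses part of $\eta\setminus\{p_1\}$, a piece of the flap survives. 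If $L\supset\overline\eta$ (e.g.\ $\overline\sigma$ minus a collar of $\beta$ pinched at $a,b$), then $K_1/L\cong K_1/\overline\eta$: here $a$ and $b$ are identified and a disc is still attached along the loop $\beta$. Contracting all of $\overline\sigma$ gives $K_1'/\beta$, which is in general not homeomorphic to $K_1'$. Finally, ``contracting $\overline\sigma$ along the collapse fibres'' maps $\overline\sigma$ onto $\beta$, not to a point. So it is not a disc contraction; it is just the collapse.
\end{itemize}

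\emph{What does work} is an elementary collapse followed by an arc contraction.
\begin{itemize}
\item If $p_1\in\sigma$, add as a new edge an arc $\lambda$ running in $\sigma$ from $p_1$ to a point $c\in\eta$; then $\sigma\setminus\lambda$ is still an open $2$-cell. If $p_1\in\eta$, put $c=p_1$ and $\lambda=\{c\}$.
\item Split $\eta$ at $c$ into $\eta_1$ ending at $a$ and $\eta_2$ ending at $b$. In $K$, $\eta_2$ is a free face of $\sigma\setminus\lambda$, because $p_1\notin\eta_2$.
\item Collapsing across $\eta_2$ leaves $K_1'$ joined to $K_2$ by the arc $\overline{\eta_1}\cup\lambda$. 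Contracting this arc gives $K_1'\vee_aK_2$.
\end{itemize}
This is all the proof of the main theorem needs, since the lemma only serves to transport sequences of moves into connected sums.

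\emph{Do not aim for a single move here.} Take $K_1'$ a standard polyhedron whose true $1$-skeleton is a simple graph, $\beta$ an arc in a $2$-cell joining interior points of two disjoint true edges, and $K_2=\mathbb S^2$. Then every collapse or special extraction of $K$ leaves points of $\eta\setminus\{p_1\}$ whose links lie in an arc, which does not happen in $K_1'\vee K_2$. A count of the true vertices of $K/L$ together with the degree-$3$ vertices of $\partial\mathscr N(L)$ excludes every disc contraction. So the statement should really allow two moves in this case.
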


The proof of Theorem~\ref{K->semi-standard-th} is by induction on the ``non-standardness''
of the polyhedron~$K$. Let~$\mathscr C$ be the set of all compact contractible
two-polyhedra~$K$ (viewed up to homeomorphism) such that~$\partial K=\varnothing$ and~$K$
has no separating vertex. By Lemmas~\ref{r-collapse-lem} and~\ref{K1K1'-lem}, together with the fact
that any polyhedron with a non-empty boundary admits an elementary collapse, it suffices to show that
any~$K\in\mathscr C$ can be transformed into a connected sum of ``more standard'' polyhedra from~$\mathscr C$
by a sequence of elementary collapses, disc contractions, and special extractions.

We now define the non-standardness~$\nu(K)$ of a two-polyhedron~$K$.
For a true edge~$e$ of~$K$, we denote its degree by~$\deg(e)$. Let~$e_1,e_2,\ldots,e_l$
be all the true edges of~$K$ of degree at least four, numbered so that
$$\deg(e_1)\geqslant\deg(e_2)\geqslant\ldots\geqslant\deg(e_l).$$
We define the \emph{edge non-standardness~$\nu_{\mathrm e}(K)$} of~$K$ as the sequence~$(\deg(e_i))_{i=1,2,\ldots,l}$
and use the standard lexicographic order to compare these sequences.
Observe that the connected components of~$K^{(1)\mathrm t}$ that are homeomorphic to a circle
do not contribute to the edge non-standardness.

For a true vertex~$v$ of~$K$, we denote by~$\beta(v)$ the first Betti number of~$\mathrm{lk}(v,K)$.
Let~$v_1,v_2,\ldots,v_m$ be all the true vertices of~$K$ whose link is not homeomorphic to~$\mathscr K_4$,
numbered so that
$$\beta(v_1)\geqslant\beta(v_2)\geqslant\ldots\geqslant\beta(v_m).$$
We define the \emph{vertex non-standardness~$\nu_{\mathrm v}(K)$} of~$K$ as the sequence~$(\beta(v_i))_{i=1,2,\ldots,m}$
and again use the lexicographic order to compare these sequences.

We define the \emph{non-standardness} of~$K$ as the quadruple
$$\nu(K)=\bigl(\nu_{\mathrm e}(K),\nu_{\mathrm v}(K),\slbf v(K),b_1(K\setminus K^{(1)\mathrm t})\bigr),$$
where~$\slbf v(K)$ is the number of true vertices of~$K$ if~$K$ is not standard, and zero otherwise.
These quadruples form a well-ordered set with respect to the lexicographic order.

Clearly, a polyhedron $K\in\mathscr C$ is standard if and only if~$\nu(K)=(\varnothing,\varnothing,0,0)$.
The following lemma completes the proof of Theorem~\ref{K->semi-standard-th}.

\begin{lemm}\label{main-lem}
Suppose that~$K\in\mathscr C$ is not standard. Then~$K$ can be transformed by a sequence
of elementary collapses, disc contractions, and special extractions
into a polyhedron~$K'=K_1\#\ldots\#K_m$ with~$K_i\in\mathscr C$ and $\nu(K_i)<\nu(K)$
for~$i=1,\ldots,m$.
\end{lemm}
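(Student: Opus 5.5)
Since $K\in\mathscr C$ is not standard and satisfies neither (K2) nor (K3), Lemma~\ref{cases-k-lem} leaves four cases: (K4), (K5), (K6) and (K7). I will treat them in the order of the lexicographic priority of $\nu$, handling one defect at a time. The defect removed should be the one with the largest contribution to $\nu$ that is visible.

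\emph{Case (K4).} Let $e$ be a true edge of maximal degree $k\geqslant4$, and pick a point $p\in e$. I first perform a special extraction at $p$. Its link is $\Theta_k$; take $G_1$ to be the union of two of the arcs and $G_2$ the union of the remaining $k-2$ arcs, cut at interior points so that $J$ consists of two points, with $T_1,T_2$ arcs joining them. The extraction replaces a piece of $e$ by a disc whose boundary meets two new vertices. At those vertices the edge splits into edges of degrees $3$ and $k-1$, up to rearrangement. Alternatively, I extract at an endpoint vertex. Iterating lowers $\nu_{\mathrm e}$ lexicographically, since one edge of degree $k$ is replaced by edges of strictly smaller degree. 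The new vertices may be non-standard, but $\nu_{\mathrm v}$ is a lower coordinate, so this is harmless. The finite-length edge $e$ must be split along its whole length; I will do this by contracting the rest of $e$ into one of the new vertices with a disc contraction, which does not increase edge degrees.

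\emph{Cases (K5) and (K6).} Now $\nu_{\mathrm e}=\varnothing$, and I take a vertex $v$ with maximal $\beta(v)$ whose link is not $\mathscr K_4$. In case (K5), the link contains disjoint cycles $c_1,c_2$. I choose $G_1\supset c_1$ and $G_2\supset c_2$ with $G_1\cap G_2$ a finite set of non-vertex points, and trees $T_i$ spanning $J$. After the $T_1,T_2$-extraction, $v$ is replaced by the cone points of $G_1\cup T_2$, of $T_1\cup T_2$ and of $T_1\cup G_2$. Their Betti numbers should be computed as $\beta(G_1)+|J|-1$, $|J|-1$ and $\beta(G_2)+|J|-1$. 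I will check that with a good choice (making $|J|$ small, ideally $|J|=2$ or $3$) each of these is strictly less than $\beta(v)=\beta(G_1)+\beta(G_2)+|J|-1$. This holds because each $\beta(G_i)\geqslant1$. If $|J|\leqslant1$, then $v$ is a separating vertex or the link is disconnected; both possibilities are excluded in $\mathscr C$ or give a connected sum. In case (K6), with link $\mathscr K_{3,3}$ and $\beta=4$, I take $G_1$ to be a neighbourhood of a $4$-cycle and $G_2$ its complement, with $|J|=2$. The resulting vertices have $\beta\leqslant3$, and $\mathscr K_{3,3}$ does not reappear. Any bigons created can be removed by disc contractions, as in Lemma~\ref{matmov-lem}.

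\emph{Case (K7).} Here $\nu_{\mathrm e}=\nu_{\mathrm v}=\varnothing$, so every true vertex is standard and the only defect is a non-simply-connected $2$-component $F$. I pick an essential arc in $F$ joining two boundary edges, or, if $F$ has genus, an arc cutting a handle. I then perform a disc contraction of a neighbourhood of the arc inside a thickened region, or an extraction that inserts a new true edge. This lowers $b_1(K\setminus K^{(1)\mathrm t})$. It may create vertices, but $\slbf v$ is then governed by the standard case, since $\slbf v=0$ when the result is standard. If it is not standard, I have to use the coordinate $\slbf v$ carefully; this is the point I am least sure of. Circle components of $K^{(1)\mathrm t}$ are handled here too, by creating vertices on them.

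After each move I must restore membership in $\mathscr C$. If the boundary becomes nonempty, I collapse. If a separating vertex appears, I split off the connected sum; the summands inherit smaller $\nu$ because defects are distributed among them. The main obstacle is verifying in each case that the newly created vertices and edges do not raise a higher-priority coordinate of $\nu$. This is worst in (K7) and in the contraction step of (K4), where collapsing an edge into a vertex increases that vertex's link; I expect to need to order the moves so that vertex complexity is only ever paid for by an edge-degree decrease.
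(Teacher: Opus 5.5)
Your (K5) step is essentially the paper's. By your formula, $\beta(G_1\cup T_2)=\beta(G)-\beta(G_2)$ and $\beta(T_1\cup G_2)=\beta(G)-\beta(G_1)$, so all apexes drop exactly because both $G_i$ contain cycles. One correction: $|J|=1$ only means the link has a bridge, not that $v$ separates $K$; the paper then collapses the boundary this creates and contracts the zero-degree edge.

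The other three cases have genuine gaps.

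\emph{(K4).} A special extraction at an interior point of $e$ cannot split $e$. Let $G=\Theta_k$ have vertices $x,y$. The requirements of Definition~\ref{resol-def} ($J$ contains no vertices of $G$, and each point of $J$ is a leaf of both $G_i$) leave only two options: one $G_i$ is an arc inside a single arc of $\Theta_k$, or $G_1,G_2$ are the star neighbourhoods of $x$ and $y$, which forces $T_i=G_i$. In both cases $\mathfrak s(K,T_1,T_2)\cong K$. Your choice of $G_1$ as two whole arcs puts $x,y$ into $J$. What is actually needed:
\begin{itemize}
\item If $e$ joins distinct vertices, just contract it; no other degree changes, so $\nu_{\mathrm e}$ drops.
\item If $e$ is a loop at $v$, it cannot be contracted. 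You must first make a special extraction at $v$ that sends the two ends $u_1,u_2$ of $e$ to different apexes $a_1,a_3$. At the same time the degrees of the new edges $a_1a_2$, $a_2a_3$ (which are degrees in $G$ of vertices of $T_1\cup T_2$) must stay below $d$.
\end{itemize}
This is where the paper's subcases come from. They include the case where every admissible $T_2$ passes through a degree-$d$ vertex $w$; there a different extraction is used, which lowers $\nu_{\mathrm v}$ instead.

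\emph{(K6).} Your own formula rules out your plan. $\mathscr K_{3,3}$ has no two disjoint cycles, so one $G_i$ is always a tree and some apex keeps $\beta=4$. Hence no single special extraction at $v$ lowers $\nu_{\mathrm v}$. Concretely, a neighbourhood of a $4$-cycle meets its complement in four points, not two ($\mathscr K_{3,3}$ is $3$-edge-connected). The complement is an H-shaped tree, so $T_2=G_2$ and $G_1\cup T_2\cong\mathscr K_{3,3}$ reappears. The paper instead proceeds in three steps:
\begin{itemize}
\item it extracts so that the apex links are $\mathscr K_4,\mathscr K_4,\mathscr K_{3,3}$;
\item it contracts a bigon between $a_2$ and $a_3$, giving a vertex with prism link (still $\beta=4$, but now with two disjoint triangles);
\item only then does it apply (K5).
\end{itemize}
Note that $\nu$ need not decrease at every intermediate step; only the final summands matter.

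\emph{(K7).} Here you have no working mechanism. Contracting an arc of $F$ joining points $p,q$ of true edges does cut $F$. But it creates a vertex whose link is two bigons joined by two edges. This is not $\mathscr K_4$, so $\nu_{\mathrm v}$ becomes non-empty and $\nu$ increases; the obvious extraction at that vertex just undoes the contraction. Special extractions at points of $F$ do nothing, since the link is a circle. You also do not say which extraction ``inserts a new true edge'', or why it would not add true vertices, which outrank $b_1$ through $\slbf v$.

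The missing idea is to split $K$ as a connected sum along essential curves. The paper takes a minimal contractible $P$ with $\partial\mathscr N(P)$ a union of essential curves and shows that its $2$-components are discs. If $P$ is a disc, it contracts $P$ directly. Otherwise it uses Matveev's technique to replace the disc $d$ at a true vertex in $\overline A$ by a disc bounded by $\alpha'\cup\beta$. Matveev moves are composites of contractions and special extractions by Lemma~\ref{matmov-lem}, and $\slbf T_3^{\pm1}$ is avoided when $Q$ embeds in a $3$-manifold. Contracting $d'$, then one special extraction and one collapse, gives $\widetilde K\#\widehat K$. Both summands have fewer true vertices and empty $\nu_{\mathrm e}$, $\nu_{\mathrm v}$.
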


\begin{proof}
By the assumption of the lemma, one of the Conditions~(K4)--(K7) of Lemma~\ref{cases-k-lem} must hold.
We now consider them one by one.

\smallskip\noindent\emph{Case}~(K4): $K$ has a true edge of degree at least four.

Let~$e$ be the closure of a true edge of~$K$ of the maximum degree, and let~$d=\deg(e)$.

Suppose first that~$e$ connects two distinct
vertices of~$K$. Let~$K'$ be the polyhedron obtained from~$K$ by contracting~$e$.
We clearly have~$K'\in\mathscr C$ and~$\nu_{\mathrm e}(K')<\nu_{\mathrm e}(K)$, which implies that~$\nu(K')<\nu(K)$, and
this completes the subcase.

Suppose now, on the contrary, that~$e$ forms a loop based at a vertex~$v$ of~$K$.
We use the notation from Definition~\ref{resol-def} and identify~$G$ with~$\partial V$.
The edge~$e$ meets~$G$ at two vertices of~$G$, which we denote by~$u_1$ and~$u_2$.

Suppose that~$u_1$ is a separating vertex of~$G$. This implies that~$G$ is of the form~$G'\cup G''$,
where~$G'$ and~$G''$ are subgraphs such that~$G'\cap G''=\{u_1\}$. We may assume, without
loss of generality, that~$u_2\in G''$.

Let~$T_1$ be a regular neighborhood of~$u_1$ in~$G''$, and let~$T_2$ be any tree contained
in~$\overline{G''\setminus T_1}$ such that~$\partial T_2=\partial T_1$. If we set~$G_1=G'\cup T_1$
and~$G_2=\overline{G''\setminus T_1}$, then the graphs~$G_i$ and trees~$T_i$ satisfy all the properties
required in Definition~\ref{resol-def}.

We denote the apexes of the cones over~$G_1\cup T_2$, $T_1\cup T_2$, and~$T_1\cup G_2$
used in the construction of~$\mathfrak s(K,T_1,T_2)$ by~$a_1$, $a_2$, and~$a_3$, respectively.
We view them as vertices of~$\mathfrak s(K,T_1,T_2)$, although~$a_2$ and, optionally, $a_3$
might not be true vertices. Every true edge of~$\mathfrak s(K,T_1,T_2)$ either has a non-empty
intersection with~$K\setminus V$ or connects~$a_2$ with either~$a_1$ or~$a_3$.
In the former case, there exists a unique edge of~$K$ that contains this intersection and has the same degree.
In the latter case, the degree of the edge is equal to the degree of some vertex in~$T_1\cup T_2$,
which is strictly less than~$d$.

Let~$e'$ be the edge of~$\mathfrak s(K,T_1,T_2)$ containing~$e\setminus V$. This edge does not form a loop,
since it connects~$a_1$ with~$a_3$. Let~$K'''$ be the polyhedron
obtained from~$\mathfrak s(K,T_1,T_2)$ by contracting the edge~$e'$.
The sequence~$\nu_{\mathrm e}(K''')$ is obtained from~$\nu_{\mathrm e}(K)$ by removing one element equal
to~$d$ and adding a finite number of strictly smaller elements. Therefore,
$\nu_{\mathrm e}(K''')<\nu_{\mathrm e}(K)$, which implies that~$\nu(K''')<\nu(K)$.

The polyhedron~$K'''$ might not belong to~$\mathscr C$, as we may have~$\partial K'''\ne\varnothing$.
This occurs if and only if~$T_1=T_2$ is a single-point set, which requires~$G$ to have a separating edge.

Let~$K''$
be a minimal subpolyhedron of~$K'''$ such that~$K'''\searrow K''$, and let~$K'$ be the polyhedron obtained from~$K''$
by contracting each edge of zero degree (such an edge cannot form a loop as~$K''$ is contractible).

Then~$K'$ is of the form~$K_1\#K_2\#\ldots\#K_m$, where each~$K_i\in\mathscr C$. Moreover, we have~$\nu_{\mathrm e}(K_i)\leqslant
\nu_{\mathrm e}(K')\leqslant\nu_{\mathrm e}(K'')\leqslant\nu_{\mathrm e}(K''')<\nu_{\mathrm e}(K)$, which implies~$\nu(K_i)<\nu(K)$ for $i=1,\ldots,m$.
The polyhedron~$K'$ is obtained from~$K$ by a special extraction and a finite number of elementary collapses and edge
contractions, so the claim follows.

It remains to consider the subcase where neither~$u_1$ nor~$u_2$ is a separating vertex of~$G$ (if~$u_2$
is a separating vertex, we exchange~$u_1$ and~$u_2$ and proceed as above).
Let~$T_1=G_1$ be a small regular neighborhood of~$u_1$ in~$G$. The boundary~$\partial T_1$
contains exactly~$d$ points. We set~$G_2=\overline{G\setminus T_1}$.

Suppose that there exists a choice for~$T_2$ such that~$T_2$ does not contain a vertex of degree~$d$,
and thus has only vertices of strictly smaller degrees. If so, we make such a choice.

Let~$a_i$ for~$i=1,2,3$ and~$e'$ be as in the previous subcase.
Since~$T_1$ is homeomorphic to the star graph~$\mathscr S_d$, the vertices~$a_2$ and~$a_3$
are connected by a single edge of~$\mathfrak s(K,T_1,T_2)$ of degree~$d$. We denote this edge by~$e''$.
The vertices~$a_1$ and~$a_2$ are connected by edges whose degrees are strictly less than~$d$.

Let~$K'$ be the polyhedron
obtained from~$\mathfrak s(K,T_1,T_2)$ by contracting the edges~$e'$ and~$e''$. We then
have~$\nu_{\mathrm e}(K')<\nu_{\mathrm e}(K)$,
and~$K'\in\mathscr C$.

However, there may be no choice of~$T_2$ without a degree-$d$ vertex. This occurs only if there exists
a degree-$d$ vertex~$w$ of~$G_2$ such that any path in~$G_2$ connecting two distinct leaves of~$G_2$ passes through~$w$.
(This vertex may or may not coincide with~$u_2$.) In this case, $G\setminus\{u_1,w\}$ consists
of exactly~$d$ connected components. These components cannot all be open arcs since~$v$
is a true vertex of~$K$.

Let~$U$ be a regular neighborhood of~$\{u_1,w\}$. We redefine the subgraphs~$T_i$ and~$G_i$  of~$G$ for~$i=1,2$
as follows. Let~$G_1$ be a connected component of~$\overline{G\setminus U}$
that is not homeomorphic to an arc. Then the boundary of~$G_1$ consists of exactly two points.
We set~$G_2=\overline{G\setminus G_1}$, and let~$T_1$ and~$T_2$ be any two arcs in~$G_1$ and~$G_2$,
respectively, connecting the two points in~$\partial G_1=\partial G_2$. Finally, we set~$K'=\mathfrak s(K,T_1,T_2)$.
We then have~$K'\in\mathscr C$.

Let~$a_1$, $a_2$, and~$a_3$ be as before. Since~$T_1\cup T_2$ is a circle, the vertex~$a_2$ is not a true
vertex of~$K'$ (and it does not even belong to~${K'}^{(1)\mathrm t}$).
This implies, in particular, that~$K'$ contains no new true edges.
The vertex~$a_3$ may or may not be a true vertex of~$K'$. If it is, then we have~$\nu_{\mathrm e}(K')=\nu_{\mathrm e}(K)$,
whereas otherwise~$\nu_{\mathrm e}(K')<\nu_{\mathrm e}(K)$, since the number of true edges of degree $d$
decreases by one. In both cases, $\nu_{\mathrm v}(K')<\nu_{\mathrm v}(K)$, hence~$\nu(K')<\nu(K)$,
and this completes Case~(K4).

\smallskip\noindent\emph{Case}~(K5): there exists a vertex~$v$ of~$K$ such that the graph~$\mathrm{lk}(v,K)$
contains two disjoint cycles.

We assume that all the true edges of~$K$ have degree three (otherwise, we proceed as in Case~(K4)), thus we have~$\nu_{\mathrm e}(K)=\varnothing$.

We choose~$G_1,G_2\subset G=\mathrm{lk}(v,K)$ and~$T_1,T_2$ as in Definition~\ref{resol-def} in such a way that~$G_1$ and~$G_2$
are not simply connected, and let~$K'''$ be the polyhedron obtained from~$K$ by the $T_1,T_2$-extraction at~$v$. One can see
that no true edge of degree strictly greater than three can arise; thus we have~$\nu_{\mathrm e}(K''')=\varnothing$,
whereas the vertex non-standardness decreases: $\nu_{\mathrm v}(K''')<\nu_{\mathrm v}(K)$.

If~$T_1$ and~$T_2$ consist of more than one point, then~$\partial K'''=\varnothing$ and~$K'''\in\mathscr C$. In this case,
we set~$K'=K'''$. Otherwise, we let~$K''$ be a minimal subpolyhedron of~$K'''$ such that~$K'''\searrow K''$,
and let~$K'$ be the polyhedron obtained from~$K''$ by contracting every zero-degree edge.
In both cases, $K'$ is of the form~$K_1\#\ldots\#K_m$ with~$K_i\in\mathscr C$, $\nu_{\mathrm e}(K_i)=\varnothing$, and~$\nu_{\mathrm v}(K_i)<
\nu_{\mathrm v}(K)$ for~$i=1,\ldots,m$.

\smallskip
\noindent\emph{Case} (K6): there exists a vertex~$v$ of~$K$ such that~$\mathrm{lk}(v,K)$ is
homeomorphic to~$\mathscr K_{3,3}$.

Let~$G_1=T_1\subset G=\mathrm{lk}(v,K)$ be a regular neighborhood of an edge of~$G$,
and let~$T_2\subset G_2=\overline{G\setminus G_1}$ be a tree such that~$T_1\cup T_2\cong\mathscr K_4$;
see Figure~\ref{k33-t2-fig}, where~$T_2$ is shown as a bold line.

The $T_1,T_2$-extraction at~$v$ yields a polyhedron~$K'''=\mathfrak s(K,T_1,T_2)$ whose
three new vertices~$a_1$, $a_2$, and~$a_3$ have links homeomorphic to~$\mathscr K_4$, $\mathscr K_4$,
and~$\mathscr K_{3,3}$, respectively; see Figure~\ref{k33aaa-fig}. The dashed lines
connect the vertices of these graphs that correspond to the same true edges of~$K'''$.
\begin{figure}[ht]
\includegraphics[scale=.6]{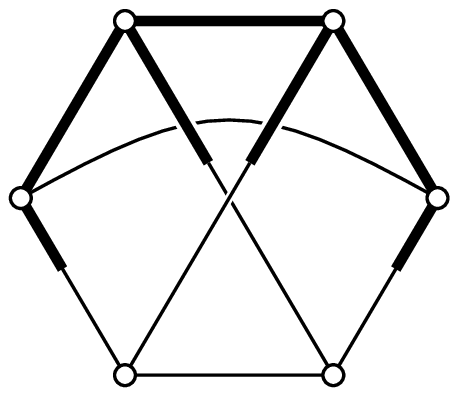}
\caption{The tree~$T_2$ in the case where~$G=\mathrm{lk}(v,K)\cong\mathscr K_{3,3}$}\label{k33-t2-fig}
\end{figure}

One can see that there exists a bigon whose boundary contains~$a_2$ and~$a_3$.
Let~$K''$ be the polyhedron obtained by contracting this bigon.
Then~$K''$ is obtained from~$K$ by replacing a regular neighborhood of~$v$
with a complex having two true vertices, one of which has a link homeomorphic to~$\mathscr K_4$,
while the other has a link homeomorphic to the $1$-skeleton of a triangular prism. We have~$\nu(K'')=\nu(K)$, but
now~$K''$ has a vertex whose link contains two disjoint cycles. So, we proceed
as in Case~(K5) to obtain a polyhedron~$K'$ that has a strictly smaller non-standardness than~$K$.
\begin{figure}[ht]
\includegraphics[scale=.5]{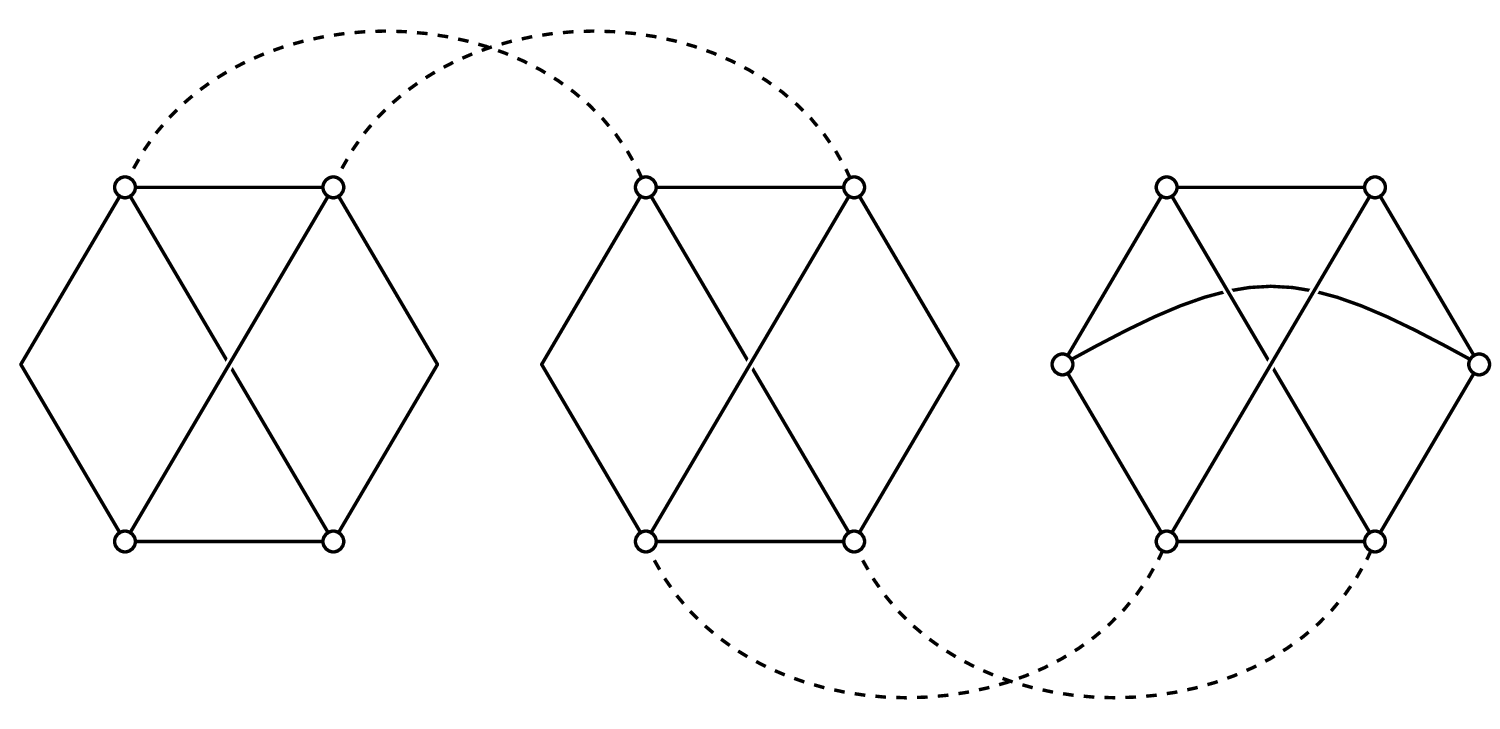}\put(-325,30){$G_1\cup T_2$}\put(-193,30){$T_1\cup T_2$}\put(-80,30){$T_1\cup G_2$}
\caption{The links of the vertices~$a_1$, $a_2$, and~$a_3$ after the $T_1,T_2$-extraction in the case where~$G=\mathrm{lk}(v,K)\cong\mathscr K_{3,3}$}
\label{k33aaa-fig}
\end{figure}

\noindent\emph{Case}~(K7):
there exists a connected component of~$K\setminus K^{(1)\mathrm t}$ that is not simply connected.
A~homotopically non-trivial simple closed curve in~$K\setminus K^{(1)\mathrm t}$ is called \emph{essential}.

Let~$P\ne\varnothing$ be a minimal contractible subpolyhedron of~$K$ such that~$\partial\mathscr N(P)$ is a disjoint union (possibly empty) of essential simple closed curves. We claim that each connected component of~$P\setminus K^{(1)\mathrm t}$ is a two-disc (which implies, in particular,
that~$P$ is a proper subset of~$K$ and~$\partial\mathscr N(P)\ne\varnothing$).

Indeed, suppose otherwise.
Let~$\alpha$ be an essential simple closed curve in~$P\setminus K^{(1)\mathrm t}$.
The regular neighborhood~$\mathscr N(\alpha)$ cannot be a M\"obius band, since otherwise~$\alpha$
would be homologically non-trivial in~$P$. Hence, $\alpha$ is two-sided, meaning that~$\mathscr N(\alpha)$ is an annulus.

The complement~$P\setminus\alpha$ is disconnected, since otherwise there
would exist a simple closed curve in~$P$ that intersects~$\alpha$ transversally
at a single point. Such a curve would be homotopically non-trivial in~$P$.

Let~$W_1$ and~$W_2$ be the two connected components of~$P\setminus\alpha$, and let~$P_i=\overline W_i$ for~$i=1,2$.
We claim that one of the subpolyhedra~$P_1$ and~$P_2$ is contractible.
Indeed, the homotopy classes of~$\alpha$ in~$\pi_1(P_1)$ and~$\pi_1(P_2)$
cannot both be of finite order, for otherwise~$H_2(P)$ would be non-trivial.
Without loss of generality, we may assume that~$\alpha$ represents
an element of~$\pi_1(P_2)$ of infinite order. Then,
since~$\pi_1(P)=\{1\}$, the loop~$\alpha$ is homotopically trivial in~$P_1$.

This implies that~$P$ is homotopy equivalent to a connected sum~$P_1\#(P_2/\alpha)$;
hence, $P_1$ is contractible. By construction,  $\partial\mathscr N(P_1)$
is a disjoint union of essential simple closed curves, and~$P_1$ is a proper subset of~$P$, which contradicts the minimality of~$P$.

The polyhedron $P$ might contain no true vertices of~$K$,
which means that~$P$ is a two-disc with~$P\cap K^{(1)\mathrm t}=\partial P$. The contraction of~$P$ turns~$K$ into a connected sum
of polyhedra~$K_1,\ldots,K_m\in\mathscr C$, where~$m$ is the number of connected components of~$\partial\mathscr N(P)$.
For every~$i=1,\ldots,m$, we have~$\nu_{\mathrm e}(K_i)=\nu_{\mathrm v}(K_i)=\varnothing$,
$\slbf v(K_i)\leqslant\slbf v(K)$, and~$b_1(K_i\setminus K_i^{(1)\mathrm t})<b_1(K\setminus K^{(1)\mathrm t})$;
thus, the claim follows in this case.

In what follows, we assume that~$P$ contains a true vertex of~$K$. Let~$Q$ be a regular neighborhood of~$P$ in~$K$,
and let~$A$ be a connected component of~$Q\setminus P$. If~$Q$ is embeddable into a three-manifold,
then we choose an embedding~$Q\hookrightarrow I^3$ such that~$Q\cap\partial I^3=\partial Q$
(it exists due to the Poincar\'e--Perelman theorem),
and let~$A$ be a connected component of~$Q\setminus P$ containing an outermost component of~$\partial Q$ in~$\mathbb S^2=
\partial I^3$.

By construction, we have~$A\cong\mathbb S^1\times[0;1)$.
Let~$\widetilde K$ be the connected component of~$K\setminus A$ that contains~$P$, and let~$\widehat K$
be the polyhedron obtained from~$K$ by contracting~$\widetilde K$ to a point. We claim that~$K$ can be transformed
into~$\widetilde K\#\widehat K$ by a sequence of elementary collapses, contractions, and special extractions.

Before proving this claim, we show that it implies the assertion of the lemma. Indeed, both~$\widetilde K$ and~$\widehat K$
have fewer true vertices than~$K$ and have empty edge and vertex non-standardnesses.
The polyhedron~$\widehat K$ already belongs to~$\mathscr C$, whereas~$\widetilde K$ may have a non-empty boundary.
By collapsing~$\widetilde K$ as much as possible and contracting all zero-degree edges, one obtains a connected sum of polyhedra from~$\mathscr C$
that have strictly smaller non-standardness than~$K$.

It remains to explain how to transform~$K$ into~$\widetilde K\#\widehat K$.

Since~$P$ is not a disc, there exists a true vertex~$v$ of~$K$ in~$\overline A$.
Let~$d=\mathscr N(v)\cap\overline A$. Clearly, $d$ is a two-disc whose boundary consists
of two arcs~$\alpha=d\cap P\subset K^{(1)\mathrm t}$ and~$\beta=\overline{\partial d\cap A}$. Let~$\beta'$
be an arc in~$\overline A$ such that~$\partial\beta'=\partial\beta$, $\beta'\setminus\partial\beta'\subset A$,
and~$\beta\cup\beta'$ forms a simple closed curve isotopic to the core circle of~$A$ (consult Figure~\ref{alpha-beta-gamma-fig}).
\begin{figure}[ht]
\includegraphics[scale=.75]{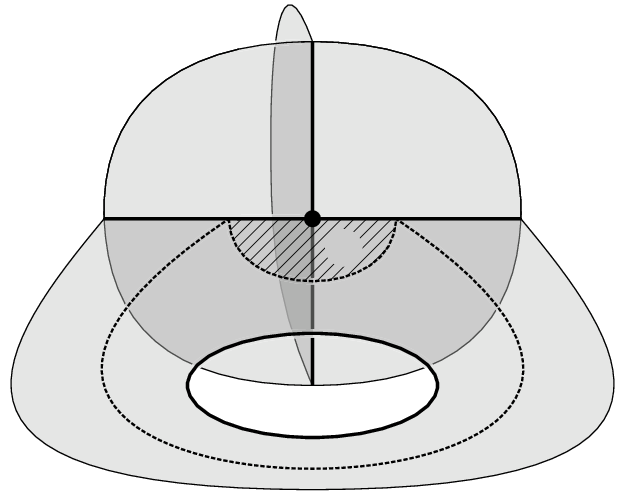}\put(-100,105){$\alpha$}
\put(-100,72){$\beta$}\put(-35,40){$\beta'$}\put(-110,130){$\delta$}
\put(-105,87){$d$}\put(-220,10){$A$}\put(-115,28){$\gamma$}
\put(-122,105){$v$}\put(-137,167){$P$}
\hskip5mm
\includegraphics[scale=.75]{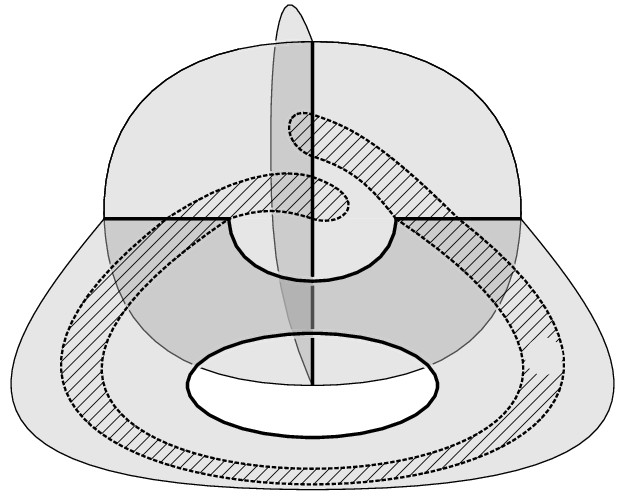}\put(-20,40){$\alpha'$}
\put(-33,48){$d'$}\put(-77,120){$\alpha'$}\put(-150,116){$\alpha'$}
\put(-124,-13){$\overline{Q\setminus d}$}
\caption{Constructing~$Q_1$ from~$Q$}\label{alpha-beta-gamma-fig}
\end{figure}
Furthermore, let~$\gamma$ be the component of~$\partial Q$ contained in~$A$, and let~$\delta$ be the arc $\overline{\mathscr N(v)\cap
K^{(1)\mathrm t}\setminus\alpha}$.

Suppose that~$Q$ is not embeddable into a three-manifold. One can find a two-disc~$d'$ contained
in a small neighborhood of~$\alpha\cup\beta'$ whose boundary consists of two arcs, one of which is~$\beta'$
while the other, which we denote by~$\alpha'$, has the following properties:
\begin{enumerate}
\item
$\partial\alpha'=\partial\alpha=\alpha'\cap\beta$;
\item
$\alpha'$ meets~$\overline{K^{(1)\mathrm t}\setminus\alpha}$ at exactly eight points, two of which are the endpoints, and four of which lie on~$\delta$;
\item
the points of~$\alpha'\cap\overline{K^{(1)\mathrm t}\setminus\alpha}$ appear on~$\alpha'$ in the following order:
an endpoint, two points of~$\alpha'\cap\delta$, two points of~$\alpha'\cap K^{(1)\mathrm t}\setminus(\partial\alpha'\cup\delta)$,
another two points of~$\alpha'\cap\delta$, and the other endpoint;
\item
the portion of~$\alpha'$ between the two points of~$\alpha'\cap K^{(1)\mathrm t}\setminus(\partial\alpha'\cup\delta)$
is contained in~$A$ and runs ``parallel'' to~$\beta'$.
\end{enumerate}

Let~$Q_1$ and~$K_1$ be the polyhedra obtained from~$Q$ and~$K$, respectively, by removing the interior of~$d$
and attaching a two-disc along the circle~$\alpha'\cup\beta$. The arc~$\alpha$ is homotopic
in~$\overline{Q\setminus d}$ to~$\alpha'$ relative to $\partial\alpha$. Using Matveev's technique, one can deduce from this that
the transition from~$Q$ to~$Q_1$ can be performed by a sequence of
Matveev moves. It is explained in~\cite[\S3]{mat87} how to produce a sequence of~$\slbf T_i^{\pm1}$-moves, $i=1,2,3$,
and the method of excluding $\slbf T_3$-moves is described in~\cite[\S5]{mat87}.

Since the polyhedra~$Q$ and~$Q_1$
are not standard, one cannot apply the results of~\cite{mat87} directly; however one can see that the
procedure described in~\cite{mat87} works perfectly well in the present situation. This is due to the fact that~$Q$ and~$Q_1$
become standard after attaching a two-disc to each connected component of~$\partial Q=\partial Q_1$. Then the procedure
described in~\cite{mat87} can be performed for these polyhedra without involving the attached discs.

To see this, two main facts must be taken into account. First, $\alpha$ and~$\alpha'$ are already homotopic in~$\overline{Q\setminus d}$. Second,
since~$Q$ is not embeddable into a three-manifold, there exists a connected component~$\sigma$ of~$P\setminus
K^{(1)\mathrm t}$ such that~$\mathscr N(\overline\sigma)$ is also not embeddable into a three-manifold. Both of
these facts are due to the contractibility of~$P$.

Thus, by Lemma~\ref{matmov-lem}, the polyhedron~$K_1$ can be obtained from~$K$ by a sequence
of contractions and special extractions.

Now let~$K_2$ be the polyhedron obtained from~$K_1$ by contracting~$d'$ to a point. This contraction creates a new vertex~$w$.
The link of~$w$ in~$K_2$ consists of two connected components, one of which is a circle, while the other has a separating
edge. After the $(T_1,T_2)$-extraction with a single-point tree~$T_1=T_2$, followed by a single elementary collapse,
we obtain a polyhedron isomorphic to~$\widetilde K\#\widehat K$, which completes this subcase.

Now consider the situation where~$Q$ is embeddable into a three-manifold. The procedure in this case is essentially
the same. The only difference is that the disc~$d'$ must be chosen more carefully, and the argument why~$K$
can be transformed into~$K_1$ by a sequence of Matveev moves is slightly different (in particular, $\slbf T_3^{-1}$-moves
are not required here).

We view~$Q$ as a subpolyhedron of~$I^3$ such that~$\partial Q=Q\cap\partial I^3$. The three-disc~$I^3$
can be identified with the mapping cylinder of a projection map~$\pi$ from~$\mathbb S^2=\partial I^3$ to~$Q$.
The component~$\gamma$ of~$\partial Q$ is chosen to be outermost in~$\mathbb S^2=\partial I^3$.
This means that there exists a two-disc~$g\subset\mathbb S^2$ such that~$g\cap Q=\partial g=\gamma$.

The disc~$d'$ must be chosen so as to satisfy the additional condition that there exists a two-disc~$\widetilde d$ contained in~$g$
such that~$d'=\pi(\widetilde d)$. One can see that one of the several options for~$d'$ meets this restriction.
The homotopy from~$\alpha$ to~$\alpha'$ that is used to produce a sequence
of $\slbf T_i^{\pm1}$-moves should also be chosen so that it can be ``lifted'' to~$g$.
In this case, all the polyhedra that arise in these moves are embeddable into~$I^3$, so the~$\slbf T_3^{\pm1}$-moves
are not used. (One can even succeed without~$\slbf T_1^{\pm1}$-moves
by using the technique explained in~\cite[Section~1.2]{mat07}.)

This completes the proof of Lemma~\ref{main-lem} and Theorem~\ref{K->semi-standard-th}.
\end{proof}

\subsection*{Acknowledgements}
This work was performed at the Steklov International Mathematical Center and supported by the Ministry of Science and Higher Education of the Russian Federation (agreement no. 075-15-2025-303).

Google Gemini (Web version) was used solely for the purpose of final language editing.

\end{document}